\numberwithin{equation}{section}
\newtheorem{thm}{Theorem}[section]
\newtheorem{lem}[thm]{Lemma}
\newtheorem{defn}[thm]{Definition}
\newtheorem{clm}[thm]{Claim}
\newcommand*{\avint}{\mathop{\ooalign{$\int$\cr$-$}}}
\newcommand{\nua}{|\nabla u|}
\newcommand{\br}{B_R(x_0)}
\newcommand{\brh}{B_{\frac{R}{2}}(x_0)}
\newcommand{\brn}{B_{R_n}(x_0)}
\newcommand{\brno}{B_{R_{n+1}}(x_0)}
\newcommand{\ra}{\rightarrow}
\newcommand{\ve}{\varepsilon}
\newcommand{\at}{a_\tau}
\newcommand{\mdiv}{\textup{div}}
\newcommand{\ut}{u_\tau}
\newcommand{\vt}{v_\tau}
\newcommand{\ft}{g_\tau}
\newcommand{\rt}{\rho_\tau}
\newcommand{\RN}{\mathbb{R}^N}
\newcommand{\dest}{\Delta\rho }
\newcommand{\io}{\int_\Omega}
\newcommand{\po}{\partial\Omega }
\newcommand{\plap}{\textup{div}\left(\partial_z E(\nabla u)\right)}
\newcommand{\nnu}{|\nabla u|}
\begin{document}
	\title[An exponential PDE in crystal surface models]{Partial regularity for an exponential PDE  in crystal surface models}
	\author{Xiangsheng Xu}\thanks
	{Department of Mathematics and Statistics, Mississippi State
		University, Mississippi State, MS 39762.
		{\it Email}: xxu@math.msstate.edu. {\it Nonlinearity}, to appear.} 
 \keywords{Crystal surface	models, existence, exponential nonlinearity,  the 1-Laplace operator, nonlinear fourth order equations, partial regularity.
	} \subjclass{ 35D30, 35Q82, 35A01, 35J30.}
	\begin{abstract} We study regularity properties of weak solutions to the boundary value problem for the equation $-\Delta \rho +a u=f$ in a bounded domain $\Omega\subset \mathbb{R}^N$, where $\rho=e^{-\mbox{div}\left(|\nabla u|^{p-2}\nabla u+\beta_0|\nabla u|^{-1}\nabla u\right)}$. This problem is derived from the mathematical modeling of crystal surfaces. It is known that the exponential term can be a measure-valued function. In this paper we obtain a partial regularity result, which asserts that there exists an open subset $\Omega_0\subset \Omega$ such that $|\Omega\setminus\Omega_0|=0$ and the exponential term is locally bounded in $\Omega_0$. Furthermore, if $x_0\in \Omega\setminus\Omega_0$, then $\rho$ vanishes of $N+2-\varepsilon$ order at $x_0$ for each $\varepsilon\in(0,2)$. 	
		\end{abstract}
	\maketitle

	\section{Introduction}
	Let $\Omega$ be a bounded domain in $\RN$ with smooth boundary $\po$ and $\nu$ the unit outward normal to $\po$. In this paper we consider the boundary value problem
	\begin{eqnarray}
		-\Delta e^{-\plap} +a u&\ni&f\ \ \mbox{in $\Omega$,}\label{sta1}\\
		\nabla e^{-\plap}\cdot\nu&\ni&0\ \ \mbox{on $\po$,}\label{sta2}\\
		\nabla u\cdot\nu&=&0\ \ \mbox{on $\po$}\label{sta3}
	\end{eqnarray}
for given data $  E(\nabla u), a$, and $f$ with properties:	  
\begin{enumerate}
\item[(H1)]The function $E=E(z)$ is given by
$$
		E(z) =\frac{1}{p}|z|^p+\beta_0 |z|,\ \ z\in \mathbb{R^N}, \ p>1, \ \beta_0>0,
$$
	and  $\partial_zE(z)$ denotes its subdifferential (\cite{H}, p.49);
\item[(H2)]	$a\in (0, \infty), \ f\in W^{1,p}(\Omega)\cap L^\infty(\Omega)$.
\end{enumerate}
Recall that the subdifferential $\partial_zE$ of $E$ is the multi-valued function from $\RN$ to $\RN$ defined by
$$x\in \partial_zE(z)\Longleftrightarrow E(y)\geq E(z)+x\cdot (y-z)\ \ \mbox{for each $y\in\RN$.}$$
A simple calculation yields
\begin{equation}
	\partial_zE(z)=\left\{\begin{array}{cc}
		|z|^{p-2}z+\beta_0 |z|^{-1}z&\mbox{if $z\ne 0$,}\\
		\beta_0{[-1,1]}^{N}&\mbox{if $z=0$,}
	\end{array}\right.\nonumber
\end{equation}
which explains the inclusion sign ``$\ni$'' in \eqref{sta1}-\eqref{sta2}.

Our interest in this
problem originated in the mathematical modeling of crystal surface growth. The manufacturing of crystal films lies at the heart of modern nanotechnology. How to accurately predict the motion of a crystal surface is of fundamental importance. Many continuum  models have been developed for this purpose \cite{AKW,G1,GLL2,GLL3,GLLM,GLLX,X1,X2}. They are often obtained as the continuum limit of a family of kinetic Monte Carlo models of crystal surface relaxation that includes both the solid-on -solid and discrete Gaussian models. In this paper we consider the model developed in \cite{GLL3}. In this model,  $u$ is the surface height.  It is now well-established that the continuum relaxation of a crystal surface below the roughing temperature is governed by the conservation law 
\begin{equation}\label{r31}
	\partial_t u + \mbox{div} J = 0,
\end{equation}
where $J$ is the adatom flux. Denote by $D$ the mobility and $\Gamma_s$ the local equilibrium density of adatoms. Then Fick's law \cite{MK} asserts
$$J = -D\nabla\Gamma_s.$$
In the diffusion-limited (DL) regime, where the dynamics is dominated by the diffusion across the
terraces, we may take
$$D\equiv1.$$
An expression for $\Gamma_s$ can be inferred from 
the Gibbs-Thomson relation \cite{KMCS,RW,MK} to be
$$
\Gamma_s=\rho_0e^{\frac{\mu}{kT_s}},$$
where $\mu$ is the chemical potential, $\rho_0$ is the constant reference density, $T_s$ is the temperature, and $k$ is the Boltzmann constant. 
According to \cite{CRSC}, we can take the general surface energy $G(u)$ to be
\begin{eqnarray*}
	G(u)= \io E(\nabla u)dx=\frac{1}{p}\io|\nabla u|^pdz+\beta_0\io|\nabla u|dz.
\end{eqnarray*}
Here $E(z)$ is given as in (H1).
As observed in \cite{MW},  this type of energy forms can retain many interesting features of the microscopic system that are lost in the more standard scaling regime. 
The chemical potential $\mu$ is
defined as the change per atom in the surface energy. This means
\begin{equation*}
	\mu=\frac{\delta G}{\delta u}=-\mbox{div}\left(\nua^{p-2}\nabla u+\beta_0\frac{\nabla u}{\nua}\right)=-\plap.
\end{equation*}
After incorporating some physical parameters into the scaling of the time and/or spatial variables, we obtain from \eqref{r31} that
\begin{equation}\label{exp}
	\partial_t u\in \Delta e^{-\mbox{div}\left(\partial_z E(\nabla u)\right)}.
\end{equation}
Our equation \eqref{sta1} is obtained by discretizing the time derivative in the above equation.

Continuum models of this type are phenomenological in nature. That is, they are derived from empirical data and observed phenomena, not first principles. Hence, their mathematical validation is important. Unfortunately, current analytical results are still far-lacking.
For example, the existence assertion for \eqref{exp} coupled with initial boundary conditions is still open. The main mathematical challenge is the exponential non-linearity involved. The function $e^s$ decays rapidly to $0$ as $s\rightarrow-\infty$. Thus, it is extremely difficult to derive any estimates for the exponential term near $-\infty$. The authors in \cite{LX,GLL3,GLLX,X1} circumvented this issue by allowing the possibility that the exponential term be a measure. In fact, an explicit solution obtained in \cite{LX} showed that this possibility did occur. We refer the reader to the remark following Theorem 1.1 in \cite{LX} for details.

The key difference between our work here and the existing papers is that we allow the exponent term to include
	the 1-Laplacian,
	  which results in more accurate models \cite{GLL3} and also gives rise to new mathematical challenges. 
	  In almost all the previous papers one either linearizes the exponential function as has been done in \cite{GG,GK}, or takes the exponent to be the Laplacian as has been done in \cite{GLL3,LX}. In \cite{X1}, the author took the exponent to be the $p$-Laplacian with $p>1$ and obtained the existence of a weak solution. Here we generalize the exponent to include the 1-Laplacian. It turns out that the existence assertion in \cite{X1} still holds. More importantly, we develop a
	  partial regularity theorem for our weak solutions.
	   We say that a point $x_0\in \Omega$ is regular if there is a neighborhood of $x_0$ in which $\nabla u$ and the entire exponential term are both bounded functions. The regular set, the collection of all regular points, is always open. Our main result asserts that its complement, the so-called singular set, has Lebesgue measure $0$. It immediately follows that the singular set is discrete in the sense that it has no interior points. 

To describe our method, we let $\tau=\frac{1}{i},\ i=1,2,\cdots$.
 We approximate $E(z)$ by
\begin{equation}\label{pht}
	E_\tau( z )=\frac{1}{p}(| z |^2+\tau)^{\frac{p}{2}}+\beta_0(| z |^2+\tau)^{\frac{1}{2}}.
\end{equation}
Then formulate our approximating problems as follows:
\begin{eqnarray}
	-\dest+\tau\ln\rho +a u &=&f\ \  \mbox{in $\Omega$},\label{ot1}\\
	-\mdiv\left[\nabla_z E_\tau(\nabla u)\right] +\tau u &=&\ln\rho \ \ \ \mbox{in $\Omega$,}\label{ot4}\\
	\nabla u\cdot\nu=\nabla\rho\cdot\nu&=&0\ \ \ \mbox{on $\partial\Omega$}.\label{ot2}
\end{eqnarray}
Here we have transformed the exponential nonlinearity into the logarithmic one. This is essentially the same as 
 we did in \cite{X1}.
In the limit ($\tau\ra 0$), \eqref{ot4} becomes
$$
	\rho\in e^{-\mdiv\left(\partial_z E(\nabla u)\right) }.
$$
Substitute this into \eqref{ot1} in the limit to obtain the original equation \eqref{sta1}.

 Our starting point is the following three a priori estimates (see Claims \ref{c43} and \ref{c44} below)
\begin{eqnarray}
	\left|\io \ln \rho dx\right|&\leq &c,\label{rlog}\\
	\|u\|_{W^{1,p}(\Omega)}&\leq & c,\nonumber\\
	\io|\nabla\sqrt{\rho }|^2dx&\leq &c.\nonumber
\end{eqnarray}
Here and in what follows the letter $c$ denotes a generic positive constant. In theory, its value can be computed from various given data. We must extract enough information from these three estimates and the three equations \eqref{ot1}-\eqref{ot2} to justify passing to the limit. The first issue is that to be able to apply Poincar\'{e}'s inequality (see Lemma \ref{poin} below) we need to know the average of $\rho $ over a set of positive measure is finite and \eqref{rlog} is far from doing that. We must bridge this gap to prevent the $\rho$-component of our approximate solutions from converging to infinity a.e. on $\Omega$ \cite{X1}. The second issue is how to estimate the function $\ln \rho $. 
To be specific, for each $\varepsilon>0$ we have $$\ln\rho\leq \rho ^\varepsilon\ \ \mbox{for $\rho$ sufficiently large}.$$
That is, we can easily gain the integrability of $\ln\rho$ from $\rho$ on the set where $\rho$ is large. However, for the same idea to work near $\rho=0$ we must have some estimates for $\rho ^{-\varepsilon}$ there.
Note that $\rho$ satisfies a non-homogeneous equation, which, by itself, does not prevent its solutions from having large zero sets or vanishing of any order at a zero point \cite{GL}. This constitutes the main difficulty of our analysis.

In the time-dependent case where
 the exponent is linear, i.e., $p=2, \beta_0=0$, it is possible to show that the singular set is empty by requiring the given data are suitably small \cite{GM,LS,PX}. In fact, one can obtain even higher regularity properties of solutions for the case \cite{A}.


The subdifferential of the second term in $E(z)$ gives rise to the 1-Laplace operator, which has the formal expression
$$\mbox{div}\left(\frac{\nabla u}{|\nabla u|}\right).$$ To give a mathematical interpretation of the term $\frac{\nabla u}{|\nabla u|}$, we follow the tradition in the monotone operator theory \cite{H}. Also see \cite{KV}. That is, we say
\begin{equation}\label{pdef}
\varphi(x)=\frac{\nabla u(x)}{|\nabla u(x)|}\Leftrightarrow \varphi(x)\in \partial_zH(\nabla u(x))\ \ \mbox{for a.e. $x\in \Omega$,}
\end{equation} where 
\begin{equation}
H(z)=|z|.\nonumber
\end{equation}
With this in mind, we can give the following definition of a weak solution.
\begin{defn}\label{df1}
	We say that a triplet $(u,\rho, \varphi)$ is a weak solution to \eqref{sta1}-\eqref{sta3} if the following conditions hold:
	\begin{enumerate}
			\item[(D1)]
		$\rho\in W^{2,p^*}(\Omega)$ with $\rho\geq 0$,
		$u\in W^{1,p}(\Omega)$, $\varphi\in (L^\infty(\Omega))^N$,
		$\mdiv(|\nabla u|^{p-2}\nabla u+\beta_0\varphi)\in \mathcal{M}(\overline{\Omega})\cap \left(W^{1,p}(\Omega)\right)^*$, where $p^*=\frac{Np}{N-p}$, $\left(W^{1,p}(\Omega)\right)^*$ is the dual space of $W^{1,p}(\Omega)$,  and $\mathcal{M} (\overline{\Omega})$ is the space of bounded Radon measures on $\overline{\Omega}$;
		\item[(D2)] For each $\ve\in (0,2)$  set
		\begin{equation}\label{la1}
			\Omega_\varepsilon=\{x_0\in \Omega:\limsup_{R\rightarrow 0}\frac{1}{R^{N+2-\ve}}\int_{\br}\rho(y)dy>0\},
		\end{equation}
	where  $\br$ is the ball centered at $x_0$ with radius $R$. Then every point in $\Omega_0\equiv\cup_{0<\ve<2}\Omega_\varepsilon$
	is regular and $\left|\Omega\setminus\Omega_0\right|=0$. Consequently, $\Omega_0$ is open and $\nnu,\ \ln\rho\in L^\infty_{\textup{loc}}(\Omega_0)$.  In addition, we have
		\begin{eqnarray}
			- \Delta \rho+au &=&f \ \  \mbox{a.e. on $\Omega$,}\label{rrrr2}\\
			-\mdiv(|\nabla u|^{p-2}\nabla u+\beta_0\varphi)&=&\ln\rho\ \  \mbox{a.e. on $\Omega$,}\label{rrrr1}\\
			\nabla \rho\cdot\nu &=& 0 \ \ \ \mbox{a.e. on $\po$}.\nonumber
		\end{eqnarray}
		The second boundary condition \eqref{sta3} is satisfied in the sense
		$$ \langle -\mdiv(|\nabla u|^{p-2}\nabla u+\beta_0\varphi), \xi \rangle
		=\io (|\nabla u|^{p-2}\nabla u+\beta_0\varphi)\cdot \nabla\xi \, dx \ \ \ \mbox{for all $\xi\in W^{1,p}(\Omega)$,}$$
		where $\langle \cdot,\cdot \rangle$ is the duality pairing between $W^{1,p}(\Omega)$ 
		and $\left(W^{1,p}(\Omega)\right)^*$
		\item[(D3)] $\varphi(x)=\frac{\nabla u(x)}{|\nabla u(x)|}$ in the sense of \eqref{pdef}.
	\end{enumerate}
\end{defn}

Our main result is the following 
\begin{thm}[Main theorem]\label{th1.1}	Assume that (H1) -(H2) hold and $\Omega$ is a bounded domain in $\mathbb{R}^N$ whose boundary $\po$ is either convex or $C^{2}$.
	Then there is a 
	weak solution to \eqref{sta1}-\eqref{sta3} in the sense of Definition \ref{df1}. 
\end{thm}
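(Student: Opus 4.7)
My plan is to construct the weak solution as the $\tau\to 0$ limit of solutions to the regularized system \eqref{ot1}-\eqref{ot2} with $\tau=1/i$, and to derive (D2) through a density-driven Harnack bootstrap at the points of $\Omega_\varepsilon$.

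\emph{Approximation and passage to the limit.} For each fixed $\tau>0$ the integrand $E_\tau$ is strictly convex and $C^1$, so a positive solution $(\ut,\rt)$ to \eqref{ot1}-\eqref{ot2} exists by an iterative fixed-point scheme of the type used in \cite{X1}. The three a priori estimates quoted in the introduction follow by testing \eqref{ot1} against $\ut$ (combined with \eqref{ot4}) and against a suitable regularization of $\rt^{-1/2}$. The main obstacle in the limit is to prevent $\rt$ from collapsing to $0$ a.e.\ on $\Omega$. Combining the $H^1$-control on $\sqrt{\rt}$ with $|\io\ln\rt\,dx|\leq c$ I would show that $|B|^{-1}\int_B\rt$ stays bounded from below on some ball of positive measure along a subsequence; Poincar\'e's inequality for $\sqrt{\rt}$ then produces uniform $L^q$-estimates on $\rt$. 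Monotonicity of $\partial_zE$ together with the subdifferential characterization \eqref{pdef} of $\varphi$ as a selection of $\partial_zH$ allows identification of the limits in \eqref{rrrr2}-\eqref{rrrr1}, giving (D3). Fatou's lemma applied to $(\ln\rt)_-$ preserves the lower bound on $\io\ln\rt$, so $\ln\rho\in L^1(\Omega)$ and in particular $\rho>0$ a.e.

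\emph{Regularity at $x_0\in\Omega_\varepsilon$.} Fix $x_0\in\Omega_\varepsilon$ with $\varepsilon\in(0,2)$ and pick $R_n\to 0$ with $R_n^{-(N+2-\varepsilon)}\int_{\brn}\rho\,dy\geq\delta>0$, i.e., $|\brn|^{-1}\int_{\brn}\rho\,dy\geq c\delta R_n^{2-\varepsilon}$. Equation \eqref{rrrr2} reads $-\Delta\rho=f-au\in L^q$ for some $q>N/2$ with $\rho\geq 0$, so the classical Harnack inequality yields
\[
\inf_{B_{R_n/2}(x_0)}\rho\ \geq\ C|\brn|^{-1}\int_{\brn}\rho\,dy\ -\ C R_n^{2-N/q}\|f-au\|_{L^q}\ \geq\ c R_n^{2-\varepsilon}
\]
for $n$ large, since $2-\varepsilon<2$. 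Thus $\ln\rho\in L^\infty(B_{R_n/2}(x_0))$. Feeding this into \eqref{rrrr1} and applying $C^{1,\alpha}$-regularity for the $p$-Laplace equation with bounded right-hand side (the 1-Laplace selection $\beta_0\varphi$ being a bounded perturbation absorbed into the right-hand side) gives $\nabla u\in L^\infty_{\textup{loc}}$ near $x_0$, so $x_0$ is regular.

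\emph{Size of the singular set.} Since $\ln\rho\in L^1(\Omega)$, the set $\{\rho=0\}$ has Lebesgue measure $0$. For every $x$ in its complement, Lebesgue's differentiation theorem gives $|\br|^{-1}\int_{\br}\rho\,dy\to\rho(x)>0$ as $R\to 0$, whence $R^{-(N+2-\varepsilon)}\int_{\br}\rho\,dy\to+\infty$ for every $\varepsilon\in(0,2)$, so $x\in\Omega_\varepsilon\subset\Omega_0$. This yields $|\Omega\setminus\Omega_0|=0$. The principal obstacle I anticipate is the non-degeneracy step of the limit: the three a priori estimates do not, by themselves, preclude $\rt\to 0$ a.e.\ on $\Omega$, and the entire partial regularity scheme hinges on producing a limit $\rho$ with $\ln\rho\in L^1$, from which the Harnack argument at points of $\Omega_\varepsilon$ and the full measure conclusion for $\Omega_0$ both follow cleanly.
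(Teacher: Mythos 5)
Your proposal follows the paper's overall strategy — regularize with $\tau$, derive a priori estimates, establish a non-degeneracy statement for $\rho_\tau$, pass to the limit, and then at points of $\Omega_\varepsilon$ bootstrap a lower bound on $\rho$ via a weak Harnack inequality followed by a gradient estimate for $u$. However, there are two concrete gaps at the places you yourself flagged as delicate.

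First, the non-degeneracy step. You state that you ``would show'' that the average of $\rho_\tau$ stays bounded below using the $H^1$-bound on $\sqrt{\rho_\tau}$ and the estimate $|\int_\Omega\ln\rho_\tau\,dx|\leq c$, but you provide no argument, and in fact this is where the paper does the heaviest lifting (Claim \ref{c44}). Note that $|\int_\Omega\ln\rho_\tau\,dx|\leq c$ controls only the \emph{signed} integral; it does not prevent $\int_\Omega\ln^+\rho_\tau\,dx$ and $\int_\Omega\ln^-\rho_\tau\,dx$ from both blowing up. The paper's proof introduces $a_\tau=\fint_\Omega\sqrt{\rho_\tau}\,dx$ and splits into a dichotomy: if $\{a_\tau\}$ is bounded, the $L^1$ bound on $\ln\rho_\tau$ follows from the $L^s$ bound on $\rho_\tau$ together with the identity $\int_\Omega\ln\rho_\tau\,dx=\tau\int_\Omega u_\tau\,dx$; if $\{a_\tau\}$ is unbounded, one derives a contradiction via a BMO/John--Nirenberg estimate \eqref{jn} for $v_\tau=\rho_\tau+\|g_\tau\|_s$ followed by a Moser iteration on $v_\tau^{-1}$ to obtain a uniform lower bound $\rho_\tau\geq c_0$. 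This mechanism is essential and nothing in your outline replaces it; you would need to reproduce it or supply an equivalent argument.

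Second, the gradient regularity of $u$ near a point of $\Omega_\varepsilon$. You propose to read \eqref{rrrr1} as a $p$-Laplace equation with a bounded right-hand side after ``absorbing'' $\beta_0\varphi$, but $\varphi$ is merely an $L^\infty$ vector field: $\mathrm{div}\,\varphi$ is in general only a measure, and even for the linear Laplacian a right-hand side of the form $\mathrm{div}\,F$ with $F\in L^\infty$ does not yield $\nabla u\in L^\infty_{\mathrm{loc}}$. The paper instead proves the local gradient bound \eqref{nub} at the regularized level (Lemma 3.2), for the full operator $\mathrm{div}(\nabla_z E_\tau(\nabla u))$, using the Hessian estimate \eqref{co} which shows that the $p$-Laplacian part dominates the regularized $1$-Laplacian uniformly in $\tau$. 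That bound, passed to the limit, is what gives $|\nabla u|\in L^\infty_{\mathrm{loc}}(\Omega_0)$; your route of splitting off the $1$-Laplacian selection at the limiting level does not work. A related smaller point: you apply the weak Harnack estimate directly to $\rho$ with source $f-au\in L^q$; since $u$ is a priori only in $W^{1,p}$, you need to observe (as the paper does) that $u_\tau^+$ is bounded in $L^\infty$ by Lemma \ref{l21}, and use the supersolution inequality $-\Delta\rho_\tau\geq -(-f+au_\tau^+ +\tau\ln^+\rho_\tau)$ so that the right-hand side is in $L^s$ for every $s$, which is what lets $\varepsilon$ range over all of $(0,2)$.
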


If $x_0$  is a singular point, we see from \eqref{la1} that
\begin{equation}
	\int_{B_{R}(x_0)}\rho dx=o(R^{N+2-\ve})\ \ \mbox{for each $\ve\in (0,2)$.}\nonumber
\end{equation}
That is, $\rho$ vanishes of at least $N+2-\ve$ order at a singular point $x_0$ \cite{GL}. As we indicated earlier, it does not seem to be possible to estimate the Hausdorff dimension of the singular set as was done in \cite{G}. 

We have included the 1-Laplacian in our analysis. The mathematical properties of this operator are still not well-understood \cite{GG,GK}. We must make sure that certain properties of the p-Poisson equation remain valid when the 1-Laplacian is present. It turns out that the p-Laplacian can always dominate the 1-Laplacian in the situation under our consideration. See Lemma \ref{l21} and \eqref{nub} below. More significantly, we have obtained a sufficient condition \eqref{la1} for a point to be regular. The condition (D2) implies that if $x_0\in\Omega$ is a Lebesgue point of $\rho$ and $\rho(x_0)>0$ then the exponential term behaves well in a neighborhood of $x_0$. By \eqref{rrrr1}, the singular part of the exponent is contained in $\Omega\setminus\Omega_0$.  As observed in \cite{X1}, we formally have
$$\Omega\setminus\Omega_0=\left\{-\mdiv(|\nabla u|^{p-2}\nabla u+\beta_0\varphi)=-\infty\right\}.$$
Thus the natural exponential function can cancel out the singular part so that \eqref{rrrr2} still holds.


This paper is organized as follows:
In Section 2 we collect a few known results. Three key preparatory lemmas are established in Section 3.  The proof of the main theorem is given in Section 4.

\section{Preliminaries}
In this section we collect a few known results that are useful to us. We begin with some elementary inequalities.

If $a, b\in [0,\infty)$ and $\beta>0$, we have
$$
(a+b)^\beta\leq \left\{\begin{array}{ll}
	a^\beta+b^\beta&\mbox{if $\beta\leq 1$,}\\
	2^{\beta-1}\left(	a^\beta+b^\beta\right)&\mbox{if $\beta>1$}.
\end{array}\right.
$$
That is, we always have $(a+b)^\beta\leq c\left(	a^\beta+b^\beta\right)$. When an occasion arises for this inequality, it will be used without acknowledgment. Other frequently used inequalities include Young's inequality
\begin{equation}\label{young}
	ab\leq \varepsilon a^p+\frac{1}{\varepsilon^{q/p}}b^q, \ \
	\mbox{ $\varepsilon>0,\, p, \, q>1$ with $\frac{1}{p}+\frac{1}{q}=1$}
\end{equation}
and the interpolation inequality
\begin{equation}\label{linterp}
	\|f\|_q\leq\varepsilon\|f\|_r+\varepsilon^{-\sigma} \|f\|_p,\ \ \mbox{
		$\varepsilon>0,\ p\leq q\leq r$, and $\sigma=\left(\frac{1}{p}-\frac{1}{q}\right)/\left(\frac{1}{q}-\frac{1}{r}\right)$},
\end{equation}
where 	$\|\cdot\|_p$ denotes the norm in the space $L^p(\Omega)$.
In the applications of the Sobolev inequality 
\begin{equation}
	\|u\|_{p^*}\leq c(\|\nabla u\|_p+\|u\|_1),\ \ p^*=\frac{Np}{N-p},\nonumber
\end{equation} it is understood that $1\leq p<N$ because the case where $p\geq N$ is much simpler in our context.

Our existence theorem is based upon the following fixed point theorem, which is often called the Leray-Schauder Theorem (\cite{GT}, p.280).
\begin{lem}\label{lsf}
	Let $B$ be a map from a Banach space $\mathcal{B}$ into itself. Assume:
	\begin{enumerate}
		\item[(LS1)] $B$ is continuous;
		\item[(LS2)] the images of bounded sets of $B$ are precompact;
		\item[(LS3)] there exists a constant $c$ such that
		$$\|z\|_{\mathcal{B}}\leq c$$
		for all $z\in\mathcal{B}$ and $\sigma\in[0,1]$ satisfying $z=\sigma B(z)$.
	\end{enumerate}
	Then $B$ has a fixed point.
\end{lem}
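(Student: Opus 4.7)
The plan is to deduce the assertion from Schauder's fixed point theorem by truncating $B$ to a closed ball in $\mathcal{B}$ where the a priori bound in (LS3) forces the truncation to be inactive at any fixed point produced. Let $c$ be the constant in (LS3) and fix any $M > c$. Introduce the continuous radial retraction $r \colon \mathcal{B} \to \overline{B_M(0)}$ defined by $r(z) = z$ when $\|z\|_{\mathcal{B}} \leq M$ and $r(z) = Mz/\|z\|_{\mathcal{B}}$ otherwise. Consider the composition $T = r \circ B$, viewed as a self-map of the closed ball $\overline{B_M(0)}$.

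First I would verify that $T$ satisfies the hypotheses of Schauder's fixed point theorem on the closed, bounded, convex set $\overline{B_M(0)}$. The self-mapping property is built into the definition of $r$; continuity follows by composing (LS1) with the continuity of $r$; and compactness follows because (LS2) sends the bounded set $\overline{B_M(0)}$ to a precompact subset of $\mathcal{B}$, which is then carried continuously by $r$ into a precompact subset of $\overline{B_M(0)}$. Applying Schauder's theorem yields a point $z^* \in \overline{B_M(0)}$ with $z^* = r(B(z^*))$.

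To conclude, I would rule out the case in which the retraction actually activates on $B(z^*)$. If $\|B(z^*)\|_{\mathcal{B}} \leq M$, then $T(z^*) = B(z^*)$ and $z^*$ is the desired fixed point. Otherwise $\|B(z^*)\|_{\mathcal{B}} > M$, in which case $z^* = \sigma B(z^*)$ with $\sigma = M/\|B(z^*)\|_{\mathcal{B}} \in (0,1)$ and $\|z^*\|_{\mathcal{B}} = M$. But (LS3) applied with this particular $\sigma$ gives $\|z^*\|_{\mathcal{B}} \leq c < M$, contradicting $\|z^*\|_{\mathcal{B}} = M$. Hence only the first case occurs and $z^* = B(z^*)$.

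The main obstacle in this strategy is packaging Schauder correctly: verifying that compactness of $B$ on bounded sets survives composition with the retraction, and that $T$ genuinely maps $\overline{B_M(0)}$ into itself. Once Schauder is in hand, the terminal dichotomy is a one-line argument, and the role of the strict inequality $M > c$ is exactly to convert the a priori estimate (LS3) into a genuine existence statement by excluding the scenario in which the fixed point of $T$ is produced by the truncation rather than by $B$ itself. An alternative route would be topological degree theory (showing that $\mathrm{deg}(I - \sigma B, B_M(0), 0)$ is $\sigma$-independent by (LS3) and equals $1$ at $\sigma = 0$), but the Schauder-plus-retraction argument is the most elementary and is the approach used in the reference cited in the statement.
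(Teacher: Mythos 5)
Your argument is correct: the truncation $T=r\circ B$ is a continuous map of $\overline{B_M(0)}$ into itself with precompact image, Schauder applies, and the strict inequality $M>c$ together with (LS3) (used with $\sigma=M/\|B(z^*)\|_{\mathcal{B}}\in(0,1)$) rules out an active retraction at the fixed point. The paper does not prove this lemma at all—it simply cites \cite{GT}, p.~280—and your retraction-plus-Schauder argument is essentially the standard proof given in that reference, so there is nothing to compare beyond noting that you have supplied the omitted details correctly.
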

\begin{lem}\label{poin}
	Let $\Omega$ be a bounded domain in $\RN$ with Lipschitz boundary and $1\leq p<N$.
	Then there is a positive number $c=c(N)$ such that
	\begin{equation}
	\|u-u_S\|_{p^*}\leq \frac{cd^{N+1-\frac{p}{N}}}{|S|^{\frac{1}{p}}}\|\nabla u\|_p \ \ \mbox{for each $u\in W^{1,p}(\Omega)$,}\nonumber
	\end{equation}
	where $S$ is any measurable subset of $\Omega$ with $|S|>0$, $u_S=\frac{1}{|S|}\int_S udx$, and $d$ is the diameter of $\Omega$.
\end{lem}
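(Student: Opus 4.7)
The plan is to introduce the full--domain average $u_\Omega = \frac{1}{|\Omega|}\int_\Omega u\,dx$ and split
\[
\|u - u_S\|_{p^*} \;\leq\; \|u - u_\Omega\|_{p^*} \;+\; \|u_\Omega - u_S\|_{p^*},
\]
so that the first piece is handled by the classical Sobolev--Poincar\'e inequality and the second piece, being the $L^{p^*}$-norm of a constant, is controlled by H\"older's inequality; this second step is where the explicit dependence on $|S|$ enters.

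First I would establish the standard Sobolev--Poincar\'e estimate $\|u - u_\Omega\|_{p^*} \leq c(N)\,d^{\beta}\|\nabla u\|_p$. Rather than using an abstract extension theorem (whose constant depends on the Lipschitz character of $\partial\Omega$), I would use the Riesz-potential representation: enclose $\Omega$ in a ball $B$ of diameter $d$, use the classical pointwise bound
\[
|u(x) - u_B| \;\leq\; \frac{c\,d^N}{|B|}\int_B \frac{|\nabla u(y)|}{|x-y|^{N-1}}\,dy \qquad\text{for a.e. } x \in B,
\]
and invoke the Hardy--Littlewood--Sobolev boundedness of the Riesz potential $I_1\colon L^p \to L^{p^*}$. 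The resulting constant depends only on $N$.

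For the second piece, since $u_\Omega - u_S$ is constant,
\[
\|u_\Omega - u_S\|_{p^*(\Omega)} \;=\; |\Omega|^{1/p^*}\,|u_\Omega - u_S|,
\]
and writing $u_\Omega - u_S = |S|^{-1}\int_S(u - u_\Omega)\,dx$ and applying H\"older against $\chi_S$ with exponent $p$ yields
\[
|u_\Omega - u_S| \;\leq\; |S|^{-1/p}\,\|u - u_\Omega\|_{L^p(\Omega)}.
\]
This is precisely where the key factor $|S|^{-1/p}$ enters. A second H\"older step raises $L^p$ to $L^{p^*}$ at the cost of $|\Omega|^{1/p - 1/p^*} = |\Omega|^{1/N} \leq c\,d$, after which the Sobolev--Poincar\'e estimate from the previous paragraph controls $\|u - u_\Omega\|_{p^*}$ by $\|\nabla u\|_p$. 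Combining the two pieces and collecting powers of $d$ via $|\Omega| \leq \omega_N d^N$ gives the claimed bound $\|u - u_S\|_{p^*}\leq c\,d^{N+1-p/N}\,|S|^{-1/p}\,\|\nabla u\|_p$.

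The main obstacle I anticipate is essentially bookkeeping: threading the exact exponent $N+1-p/N$ of $d$ through the chain of H\"older, Sobolev, and volume-comparison estimates, and confirming that all constants along the way depend only on $N$. This is why the Riesz-potential representation on an enclosing ball is preferable here to an abstract extension argument whose operator norm would depend on the shape of $\Omega$. Beyond Sobolev--Poincar\'e and H\"older no new analytic ingredient is needed.
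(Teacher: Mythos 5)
Your decomposition through the full-domain average and the H\"older step on $S$ (which is what produces the factor $|S|^{-1/p}$) are fine; the chain $\|u_\Omega-u_S\|_{p^*}=|\Omega|^{1/p^*}|u_\Omega-u_S|\le |\Omega|^{1/p^*}|S|^{-1/p}\|u-u_\Omega\|_{p}\le |\Omega|^{1/p}|S|^{-1/p}\|u-u_\Omega\|_{p^*}$ is correct. The gap is in the first half, where you justify the Sobolev--Poincar\'e estimate with a constant depending only on $N$. The pointwise bound $|u(x)-u_B|\le \frac{c\,d^N}{|B|}\int_B |\nabla u(y)|\,|x-y|^{1-N}\,dy$ on an enclosing ball $B$ is not available as stated: $u$ is defined only on $\Omega$, so both $u_B$ and the integral over $B$ presuppose an extension of $u$ to $B$ --- exactly the tool you set out to avoid --- and the norm of any Sobolev extension operator depends on the Lipschitz character of $\partial\Omega$, not only on $N$. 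The segment-integration argument behind that representation works without extension only when the domain itself is convex; that is precisely Lemma 7.16 of Gilbarg--Trudinger, which is what the paper invokes: for convex $\Omega$ it gives $|u(x)-u_S|\le \frac{d^N}{N|S|}\int_\Omega |\nabla u(y)|\,|x-y|^{1-N}\,dy$ with the subset average already built in, after which the $L^p\to L^{p^*}$ potential estimate finishes, with no detour through $u_\Omega$. Moreover, a purely dimensional constant is impossible for general Lipschitz domains: a dumbbell (two balls joined by a thin neck, with a test function close to $\pm1$ on the two bulbs) has fixed diameter and Lipschitz boundary, yet its Poincar\'e--Sobolev constant blows up as the neck shrinks. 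So some domain dependence (convexity as in the cited lemma, or simply the fixed domain used in the paper) must enter; the claim that the Riesz-potential route gives $c=c(N)$ for every Lipschitz $\Omega$ is the step that fails.

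Two smaller points. The Hardy--Littlewood--Sobolev boundedness of $I_1\colon L^p\to L^{p^*}$ requires $p>1$; at the endpoint $p=1$, which the statement allows, the potential is only of weak type, so that case needs a separate (truncation/co-area) treatment. Finally, your bookkeeping yields $\|u-u_S\|_{p^*}\le c\,(|\Omega|/|S|)^{1/p}\|\nabla u\|_p\le c\,d^{N/p}|S|^{-1/p}\|\nabla u\|_p$, not the stated power $d^{\,N+1-p/N}$; since $N/p\le N+1-p/N$, this implies the stated form only when $d\ge 1$, and the two powers cannot be reconciled by absorption when $d<1$. For the paper's purposes (a fixed domain, with only the $|S|$-dependence mattering) this is harmless, but it is not mere bookkeeping within the statement as written.
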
	
This lemma can be inferred from Lemma 7.16 in \cite{GT}. Also see \cite{G,PS}. It is a version of Poincar\'{e}'s inequality. 

 \begin{lem}\label{ynb}
	Let $\{y_n\}, n=0,1,2,\cdots$, be a sequence of positive numbers satisfying the recursive inequalities
	\begin{equation}\label{ynb1}
		y_{n+1}\leq cb^ny_n^{1+\alpha}\ \ \mbox{for some $b>1, c, \alpha\in (0,\infty)$.}
	\end{equation}
	If
	\begin{equation*}
		y_0\leq c^{-\frac{1}{\alpha}}b^{-\frac{1}{\alpha^2}},
	\end{equation*}
	then $\lim_{n\rightarrow\infty}y_n=0$.
\end{lem}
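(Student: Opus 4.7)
The plan is to prove geometric decay by induction: I would show $y_n \leq y_0 q^n$ for all $n \geq 0$, with a carefully chosen $q \in (0,1)$. Since $q<1$, this immediately gives $y_n \to 0$. The ansatz is natural because the recursion $y_{n+1}\leq cb^n y_n^{1+\alpha}$ combines a mild exponential amplification $b^n$ with the superlinear contraction $y_n^{1+\alpha}$, and the smallness threshold on $y_0$ is precisely what makes contraction win.

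To carry out the induction, I would assume $y_n \leq y_0 q^n$ and substitute into \eqref{ynb1} to obtain $y_{n+1} \leq c b^n y_0^{1+\alpha} q^{n(1+\alpha)}$. I then want this bounded by $y_0 q^{n+1}$, which rearranges to the requirement
\begin{equation*}
c\, y_0^\alpha\, (b q^\alpha)^n \leq q \quad\text{for all } n\geq 0.
\end{equation*}
The cleanest way to make the left-hand side non-increasing in $n$ is to choose $q$ so that $bq^\alpha \leq 1$, i.e.\ $q \leq b^{-1/\alpha}$. Taking $q = b^{-1/\alpha}$ (which lies in $(0,1)$ since $b>1$) reduces the requirement to the single inequality $c y_0^\alpha \leq q = b^{-1/\alpha}$, equivalently $y_0 \leq c^{-1/\alpha} b^{-1/\alpha^2}$. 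This is exactly the hypothesis, so the induction closes, and the base case $n=0$ is the trivial identity $y_0 \leq y_0$.

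Concluding, I would note $y_n \leq y_0 b^{-n/\alpha}\to 0$ as $n\to\infty$. There is really no obstacle here: the only subtlety is that the smallness threshold on $y_0$ stated in the lemma is sharp for this choice of $q$, so any looser bound on $y_0$ would force $q > b^{-1/\alpha}$ and break the monotonicity in $n$ of $(bq^\alpha)^n$. The argument uses only the recursion and elementary algebra, so no results from earlier in the paper are needed.
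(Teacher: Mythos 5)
Your proof is correct and is essentially the standard argument from DiBenedetto's book, which is what the paper cites for this lemma (the paper itself gives no proof, only the reference). Showing by induction that $y_n \le y_0\, b^{-n/\alpha}$, with the hypothesis on $y_0$ being exactly what closes the inductive step, is precisely the argument in the cited source.
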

This lemma can be found in (\cite{D}, p.12).

\section{Three key lemmas}
In this section we prove three key lemmas. They lay the foundation for our existence theorem.

The first lemma deals with the exponent in our problem. 	

Let $E_\tau$ be given as in \eqref{pht}.
Define
  \begin{equation}\label{ftd}
  	F_\tau(s)=(s+\tau)^{\frac{p-2}{2}}+\beta_0(s+\tau)^{-\frac{1}{2}}\ \ \mbox{on $[0,\infty)$}.
  \end{equation}
Then we can easily verify
\begin{equation}
	\nabla_z E_\tau( z )= F_\tau(|z|^2)z.\nonumber
\end{equation}
  In the subsequent proof we will assume
  \begin{equation}\label{conpn}
  	1<p\leq 2,\ \ N>2.
  \end{equation}
  This is done mainly for the convenience in applying the Sobolev inequality and also avoiding non-essential complications. Cases where $p>2$ and/or $N=2$ \cite{X2} are simpler, and we leave them to the interested reader.
For $p\in (1, 2]$, we have  (see, e.g., \cite{X2}) that 
  \begin{eqnarray}
  	\left((|z|^2+\tau)^{\frac{p-2}{2}}z-	(|y|^2+\tau)^{\frac{p-2}{2}}y\right)\cdot(z-y)&\geq&(p-1)\left(1+|y|^2+|z|^2\right)^{\frac{p-2}{2}}|z-y|^2,\nonumber\\
  	\left((|z|^2+\tau)^{-\frac{1}{2}}z-	(|y|^2+\tau)^{-\frac{1}{2}}y\right)\cdot(z-y)&\geq &0,\ \ z ,y\in\mathbb{R}^N.\nonumber
  \end{eqnarray}
  Subsequently,
\begin{eqnarray}
	\lefteqn{(F_\tau(|z|^2)z-F_\tau(|y|^2)y)\cdot( z -y)}\nonumber\\
	&\geq& (p-1)\left(1+|y|^2+|z|^2\right)^{\frac{p-2}{2}}|z-y|^2\ \ \ \mbox{for all $ z ,y\in\mathbb{R}^N$.}\label{uni}
\end{eqnarray}

\begin{lem}\label{l21}
	Let $\Omega$ be a bounded domain in $\mathbb{R}^N$ with Lipschitz boundary $\po$. Consider the problem
	\begin{eqnarray}
		-\mdiv\left[\nabla_zE_\tau(\nabla u)\right] +\tau u&=& f\ \ \textup{in $\Omega$,}\label{of1}\\
		\nabla u\cdot\nu&=&0\ \ \textup{on $\po$,}\label{of2}
	\end{eqnarray}
	where  $ p>1$ and $f\in L^{\frac{Np}{Np-N+p}}(\Omega)$. Then there is a unique weak solution $u$ to the  problem in the space $W^{1,p}(\Omega)$. 
	Furthermore,
	if $f$ also lies in the space $L^{s}(\Omega)$ with
	\begin{equation}\label{of3}
		s>\frac{N}{p},
	\end{equation}  we have, for any ball $\br\subset\Omega$,
\begin{equation}\label{ulb}
\sup_{\brh}u^\pm\leq c\left(\avint_{B_{R}(x_0)}(u^\pm)^pdx\right)^{\frac{1}{p}}+R\left(1+\tau^{\frac{1}{2}}\right)+\left(R^{\frac{ps-N}{ s}}\|f^\pm\|_ {s, B_R(x_0)}\right)^{\frac{1}{p-1}},
\end{equation}
where $c=c(N,s, p)$ and $\avint_{B_{R}(x_0)}(u^\pm)^pdx=\frac{1}{|\br|}\int_{B_{R}(x_0)}(u^\pm)^pdx$. 
\end{lem}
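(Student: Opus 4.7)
The plan splits into two parts: existence and uniqueness via monotone operator theory, and the sup bound via a De Giorgi--Moser iteration that absorbs both the $1$-Laplace term and the $\tau$-regularization into harmless corrections.

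For existence and uniqueness, I would apply the Browder--Minty theorem to the operator $\mathcal{A}\colon W^{1,p}(\Omega)\to (W^{1,p}(\Omega))^*$ defined by $\langle \mathcal{A}(u),v\rangle=\io[\nabla_z E_\tau(\nabla u)\cdot\nabla v+\tau u v]\,dx$. The pointwise bound $|\nabla_z E_\tau(z)|\leq (|z|^2+\tau)^{(p-1)/2}+\beta_0$ makes $\mathcal{A}$ well-defined and continuous on $W^{1,p}(\Omega)$. Strict monotonicity follows from \eqref{uni} together with the trivial monotonicity of $u\mapsto \tau u$, and coercivity from $(|z|^2+\tau)^{(p-2)/2}|z|^2\geq 2^{(p-2)/2}|z|^p-c_p\tau^{p/2}$. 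The hypothesis $f\in L^{Np/(Np-N+p)}(\Omega)=(L^{p^*}(\Omega))^*$ guarantees the right-hand side lies in the dual, so Browder--Minty yields a unique weak solution in $W^{1,p}(\Omega)$.

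For the sup bound I would run Moser iteration. Fix $\br\subset\Omega$, pick $R/2\leq\rho'<\rho\leq R$ and a cutoff $\eta\in C_c^\infty(B_\rho(x_0))$ with $\eta\equiv 1$ on $B_{\rho'}(x_0)$ and $|\nabla\eta|\leq c/(\rho-\rho')$. Put $k=R(1+\tau^{1/2})$, $w=(u-k)_+$, and test \eqref{of1} with $\phi=\eta^p w^{\beta+1}$ for $\beta\geq 0$. The left-hand side splits into a $p$-growth contribution, a $1$-Laplace contribution, and a $\tau u$ contribution; on $\{w>0\}$ one has $u\geq k\geq 0$, so the last integral is non-negative and may be discarded. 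The $1$-Laplace integrand $\beta_0(|\nabla u|^2+\tau)^{-1/2}\nabla u$ is pointwise bounded by $\beta_0$, so \eqref{young} absorbs it into the principal $p$-Laplace energy at the price of a $(\rho-\rho')^{-p}\int w^{\beta+p}$-type term, while the $\tau^{p/2}$ residue produced by the regularization is controlled by the choice $k\geq R\tau^{1/2}$ via H\"older's inequality on $\br$. The outcome is a Caccioppoli inequality $\int_{B_{\rho'}(x_0)}w^\beta|\nabla w|^p\,dx\leq \frac{C(\beta)}{(\rho-\rho')^p}\int_{B_\rho(x_0)}w^{\beta+p}\,dx+C\int_{B_\rho(x_0)\cap\{w>0\}}|f^+|\,w^{\beta+1}\,dx$.

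Rewriting the left-hand side in terms of $W=w^{(\beta+p)/p}$, applying the Sobolev inequality, and controlling the $f$-term by H\"older using the hypothesis $s>N/p$ from \eqref{of3}, one reaches a reverse H\"older step relating the $L^{p^*(\beta+p)/p}$ norm of $w$ on $B_{\rho'}(x_0)$ to its $L^{\beta+p}$ norm on $B_\rho(x_0)$. Iterating along a geometric sequence of radii $R_n\downarrow R/2$ and exponents $\beta_n\to\infty$ chosen so that $y_n=\big(\avint_{B_{R_n}(x_0)}w^{p_n}\,dx\big)^{1/p_n}$ satisfies the recursion of Lemma \ref{ynb} yields $\sup_{\brh}w\leq c\big(\avint_{\br}w^p\,dx\big)^{1/p}+\big(R^{(ps-N)/s}\|f^+\|_{s,\br}\big)^{1/(p-1)}$, which combined with $w=(u-k)_+$ and $k=R(1+\tau^{1/2})$ delivers \eqref{ulb} for $u^+$. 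The bound for $u^-$ follows by running the same argument on the pair $(-u,-f)$, which solves the analogous equation since $E_\tau$ is even in $z$. The main technical hurdle is keeping the constants in the Caccioppoli inequality independent of $\tau$ so that one can later pass to the limit $\tau\to 0$; this is precisely the role of the shift $k\geq R(1+\tau^{1/2})$, which quarantines the $1$-Laplace residue and the regularization error as lower-order terms that do not pollute the iteration.
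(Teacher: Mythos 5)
Your existence argument (Browder--Minty) differs from the paper, which constructs the solution as the minimizer of $\io E_\tau(\nabla u)\,dx-\io fu\,dx$; both are legitimate, but the paper's variational route is somewhat lighter since coercivity and weak lower semicontinuity are immediate for an integral functional with convex integrand.

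For the sup bound, however, there is a genuine inconsistency. The paper runs a De Giorgi iteration: it fixes a large level $k$, sets $k_n=k-k/2^{n+1}$, $y_n=\int_{\brn}[(u-k_n)^+]^p\,dx$, and derives a power-type recursion $y_{n+1}\leq cb^n y_n^{1+\alpha}$, to which Lemma~\ref{ynb} applies and forces $y_n\to 0$, hence $u\leq k$ on $B_{R/2}$. Your plan is a Moser iteration: a single level $k=R(1+\tau^{1/2})$, $w=(u-k)_+$, test functions $\eta^p w^{\beta+1}$ with $\beta_n\to\infty$, and a reverse H\"older chain. That scheme can certainly be made to work, but the engine that closes it is \emph{not} Lemma~\ref{ynb}: a reverse H\"older step gives a multiplicative recursion of the form $\|w\|_{p_{n+1},R_{n+1}}\leq C_n^{1/p_n}\big(\|w\|_{p_n,R_n}+D_n\big)$, and one concludes by showing $\prod_n C_n^{1/p_n}<\infty$ and letting $p_n\to\infty$ so the norms converge to $\sup w$. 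Lemma~\ref{ynb}, by contrast, is a vanishing lemma ($y_n\to 0$ under a smallness hypothesis on $y_0$), and the quantities $y_n=(\avint_{\brn} w^{p_n})^{1/p_n}$ you define do not and should not tend to zero -- they tend to $\sup w$. Invoking Lemma~\ref{ynb} here is a category error: you have set up Moser but tried to close it with De Giorgi's lemma. You would either need to switch entirely to De Giorgi (iterating levels $k_n$ as the paper does) so that Lemma~\ref{ynb} is the right tool, or replace the final step by the standard Moser product argument.

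A secondary issue is that the absorption of the $\tau^{p/2}$ residue is asserted but not mechanized. In the paper's De Giorgi scheme the residue produces a term $\tau^{p/2}|Q_n|$, and the crucial inequality $y_n\geq k^p 2^{-p(n+2)}|Q_n|$ converts this into $y_n^{1+\alpha}$ at the cost of a $k^{-p(1+\alpha)}$ factor, which is precisely why $k\geq R\tau^{1/2}$ is imposed. In your Moser framework there is no analogue of the level-set inequality $y_n\gtrsim k^p|Q_n|$ (since you do not iterate levels), so the mechanism by which $k\geq R\tau^{1/2}$ quarantines this term would need to be spelled out explicitly -- for instance by shifting $w$ to $\bar w=w+R(1+\tau^{1/2})$ and running the iteration on $\bar w$. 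As written, ``controlled by the choice $k\geq R\tau^{1/2}$ via H\"older's inequality on $\br$'' does not yet constitute an argument.
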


Of course, this lemma is well known if $\beta_0=0$. For our purpose, we need precise information on how $u$ is bounded by $f$. We will offer a proof.
\begin{proof} 
	
	The existence of a solution can be established by showing the functional
	$$\io E_\tau(\nabla u)dx-\io f udx$$ has a minimizer in $W^{1,p}(\Omega)$, while the uniqueness can be inferred from \eqref{uni}. We shall omit the details.
	
The proof of \eqref{ulb} is based upon the De Giorgi iteration scheme. Let $x_0, R$ be given as in the lemma. Define a sequence of concentric balls $B_{R_n}(x_0)$ in $\Omega$ as follows:
	\begin{equation}
		B_{R_n}(x_0)=\{x:|x-x_0|<R_n\},\nonumber
	\end{equation}
	where
	\begin{equation}
		R_n=\frac{R}{2}+\frac{R}{2^{n+1}},\ \ \ n=0,1,2,\cdots.\nonumber
	\end{equation} Choose a sequence of smooth functions $\theta_n$ so that
	\begin{eqnarray}
		\theta_{n+1}(x)&=& 1 \ \ \mbox{in $B_{R_{n+1}}(x_0)$},\nonumber\\
		\theta_{n+1}(x)&=&0\ \ \mbox{outside $B_{R_{n}}(x_0)$},\nonumber\\
		|\nabla \theta_{n+1}(x)|&\leq & \frac{c2^n}{R}\ \ \mbox{for each $x\in \mathbb{R}^N$,}\ \ \ \mbox{and}\nonumber\\
		0&\leq &\theta_{n+1}(x)\leq 1\ \ \mbox{in $\mathbb{R}^N$.}\nonumber
	\end{eqnarray}
	Select 
	$$
	k> 0.
	$$
	as below. Let
	\begin{eqnarray}
		k_n&=&k-\frac{k}{2^{n+1}},\nonumber\\
		Q_n&=&\{x\in\brn: u(x)\geq k_{n+1}\},\label{qn}\\
		y_{n}&=&\int_{\brn}\left[(u-k_{n})^+\right]^{p}dx, \ \ n=0,1,2,\cdots.\nonumber
	\end{eqnarray}Using $(u-k_{n+1})^+\theta_{n+1}^2$ as a test function in \eqref{of1} yields
	\begin{eqnarray}
		\lefteqn{	\io\left(|\nabla(u-k_{n+1})^+|^2+\tau\right)^{\frac{p}{2}-1}|\nabla(u-k_{n+1})^+|^2\theta_{n+1}^2dx}\nonumber\\
		&\leq&-\io(u-k_{n+1})^+\theta_{n+1}\nabla\theta_{n+1}\cdot F_\tau(\nnu^2)\nabla u dx+\io f^+(u-k_{n+1})^+\theta_{n+1}^2dx\nonumber\\
		&\equiv&I_1+I_2.\nonumber
	\end{eqnarray}
	From \eqref{conpn} and \eqref{ftd}, we have
	$$\frac{p}{p-1}\geq 2,\ \ |F_\tau(\nnu^2)\nabla u|\leq \nnu^{p-1}+\beta_0.$$
	With these in mind, we derive
	\begin{eqnarray}
		I_1&\leq &\frac{c2^n}{R}\io\theta_{n+1}(u-k_{n+1})^+\left(\nnu^{p-1}+\beta_0\right)dx\nonumber\\
		&\leq &\varepsilon\int_{Q_n}\theta_{n+1}^2|\nabla(u-k_{n+1})^+|^pdx+\frac{c(\ve)2^{pn}}{R^p}y_{n}+\frac{c2^n}{R}y_n^{\frac{1}{p}}|Q_n|^{1-\frac{1}{p}},\ \ \ve>0.\nonumber
	\end{eqnarray}
	As for $I_2$, we deduce from  the Sobolev inequality that
	\begin{eqnarray}
		I_2&\leq&\left(\int_{Q_n}|f^+|^{\frac{Np}{Np-N+p}}dx\right)^{\frac{Np-N+p}{Np}}\left(\io\left[(u-k_{n+1})^+\theta_{n+1}^2\right]^{\frac{Np}{N-p}}dx\right)^{\frac{N-p}{Np}}\nonumber\\
		&\leq &c\|f^+\|_ {s,\br}|Q_n|^{\frac{Np-N+p}{Np}-\frac{1}{ s}}\left(\left(\io\theta_{n+1}^2|\nabla(u-k_{n+1})^+|^pdx\right)^{\frac{1}{p}}+\frac{c2^n}{R}y_{n}^{\frac{1}{p}}\right)\nonumber\\
		&\leq &\varepsilon\int_{Q_n}\theta_{n+1}^2|\nabla(u-k_{n+1})^+|^pdx+\frac{c2^{pn}}{R^p}y_{n}+c(\ve)\left(\|f^+\|_ {s, B_R(x_0)}\right)^{\frac{p}{p-1}}|Q_n|^{\frac{Np-N+p}{N(p-1)}-\frac{p}{ s(p-1)}}.\nonumber
	\end{eqnarray}
	On the other hand, we can deduce from \eqref{conpn} that
	\begin{eqnarray}
		\lefteqn{\int_{Q_n}\theta_{n+1}^2|\nabla(u-k_{n+1})^+|^pdx}\nonumber\\
		&\leq&\int_{Q_n}\theta_{n+1}^2\left(|\nabla(u-k_{n+1})^+|^2+\tau\right)^{\frac{p}{2}-1}\left(|\nabla(u-k_{n+1})^+|^2+\tau\right)dx\nonumber\\
		&\leq&2\varepsilon\int_{Q_n}\theta_{n+1}^2|\nabla(u-k_{n+1})^+|^pdx+\frac{c2^{pn}}{R^p}y_{n}+\frac{c2^n}{R}y_n^{\frac{1}{p}}|Q_n|^{1-\frac{1}{p}}\nonumber\\
		&&+c(\ve)\left(\|f^+\|_ {s, B_R(x_0)}\right)^{\frac{p}{p-1}}|Q_n|^{\frac{Np-N+p}{N(p-1)}-\frac{p}{ s(p-1)}}+\tau^{\frac{p}{2}}|Q_n|.\nonumber
	\end{eqnarray}
	By taking $\ve$ suitably small, we arrive at
	\begin{eqnarray}
		\io\theta_{n+1}^2|\nabla(u-k_{n+1})^+|^pdx	&\leq &\frac{c2^{pn}}{R^p}y_{n}+\frac{c2^n}{R}y_n^{\frac{1}{p}}|Q_n|^{1-\frac{1}{p}}\nonumber\\
		&&+c(\ve)\left(\|f^+\|_ {s, B_R(x_0)}\right)^{\frac{p}{p-1}}|Q_n|^{\frac{Np-N+p}{N(p-1)}-\frac{p}{ s(p-1)}}+\tau^{\frac{p}{2}}|Q_n|.\nonumber
	\end{eqnarray}
	Apply the Sobolev inequality again to deduce
	\begin{eqnarray}
		y_{n+1}&\leq&\io\left[\theta_{n+1}^2(u-k_{n+1})^+\right]^pdx\nonumber\\
		&\leq&\left(\io\left[\theta_{n+1}^2(u-k_{n+1})^+\right]^{\frac{Np}{N-p}}dx\right)^{\frac{N-p}{N}}|Q_n|^{\frac{p}{N}}\nonumber\\
		&\leq&c\left(\io\theta_{n+1}^2|\nabla(u-k_{n+1})^+|^pdx+\frac{2^{pn}}{R^p}y_{n}\right)|Q_n|^{\frac{p}{N}}\nonumber\\
		&\leq &\frac{c2^{pn}}{R^p}y_{n}|Q_n|^{\frac{p}{N}}+\frac{c2^n}{R}y_n^{\frac{1}{p}}|Q_n|^{1-\frac{1}{p}+\frac{p}{N}}\nonumber\\
		&&+c\left(\|f^+\|_ {s, B_R(x_0)}\right)^{\frac{p}{p-1}}|Q_n|^{1+\frac{(ps-N)p}{(p-1)N s}}+\tau^{\frac{p}{2}}|Q_n|^{1+\frac{p}{N}}\nonumber\\
		&\equiv&J_1+J_2+J_3+J_4.\nonumber
	\end{eqnarray}
	Note that
	\begin{eqnarray}
		y_n&\geq &\int_{Q_n}\left[(u-k_{n})^+\right]^pdx\geq \frac{k^p}{2^{pn+2p}}\left|Q_n\right|.\nonumber
	\end{eqnarray}
	Also remember from  \eqref{of3} that $\frac{(ps-N)p}{(p-1)N s}>0$. Thus we can pick a number $\alpha$ with the property
	$$
	0<\alpha\leq \min\left\{\frac{(ps-N)p}{(p-1)N s},\frac{p}{N}\right\}.
	$$
	Then $\alpha>0$ due to \eqref{of3}.
	Subsequently,
	\begin{eqnarray}
		J_1&=&\frac{c2^{pn}}{R^p}y_{n}|Q_n|^{\frac{p}{N}-\alpha+\alpha}\leq\frac{c2^{pn}}{R^{N\alpha}}y_{n}|Q_n|^{\alpha}\leq \frac{c2^{(\alpha+1)pn}}{R^{N\alpha}k^{p\alpha}}y_{n}^{1+\alpha},\nonumber\\
		J_2&=&\frac{c2^n}{R}|Q_n|^{\frac{p}{N}-\alpha}y_n^{\frac{1}{p}}|Q_n|^{1-\frac{1}{p}+\alpha}\leq\frac{c2^nR^{p-1}}{R^{N\alpha}}y_n^{\frac{1}{p}}|Q_n|^{1-\frac{1}{p}+\alpha}\leq \frac{c2^{(\alpha+1)pn}R^{p-1}}{R^{N\alpha}k^{p(1-\frac{1}{p}+\alpha)}}y_n^{1+\alpha}\nonumber\\
		&\leq& \frac{c2^{(\alpha+1)pn}}{R^{N\alpha}k^{p\alpha}}y_{n}^{1+\alpha}\ \ \mbox{if $R\leq k$},\nonumber\\
		J_3&\leq&\frac{c\left(\|f^+\|_ {s, B_R(x_0)}\right)^{\frac{p}{p-1}}R^{\frac{(ps-N)p}{(p-1) s}}}{R^{N\alpha}}|Q_n|^{1+\alpha}\nonumber\\
		&\leq &\frac{c\left(R^{\frac{ps-N}{ s}}\|f^+\|_ {s, B_R(x_0)}\right)^{\frac{p}{p-1}}2^{(\alpha+1)pn}}{R^{N\alpha}k^{p(1+\alpha)}}y_{n}^{1+\alpha}\nonumber\\
		&\leq&\frac{c2^{(\alpha+1)pn}}{R^{N\alpha}k^{p\alpha}}y_{n}^{1+\alpha}\ \ \mbox{if $\left(R^{\frac{ps-N}{ s}}\|f^+\|_ {s, B_R(x_0)}\right)^{\frac{1}{p-1}}\leq k$,}\nonumber\\
		J_4&\leq &\frac{cR^p\tau^{\frac{p}{2}}}{R^{N\alpha}}|Q_n|^{1+\alpha}\leq \frac{cR^p\tau^{\frac{p}{2}}2^{(\alpha+1)pn}}{R^{N\alpha}k^{p(1+\alpha)}}y_{n}^{1+\alpha}\nonumber\\
		&\leq &\frac{c2^{(\alpha+1)pn}}{R^{N\alpha}k^{p\alpha}}y_{n}^{1+\alpha}\ \ \mbox{if $R\tau^{\frac{1}{2}}\leq k$.}\nonumber
	\end{eqnarray}
	We choose $k$ so large that 
	\begin{equation}\label{aklb}
		k\geq R\left(1+\tau^{\frac{1}{2}}\right)+\left(R^{\frac{ps-N}{ s}}\|f^+\|_ {s, B_R(x_0)}\right)^{\frac{1}{p-1}}.
	\end{equation}
	Then we have
	$$
	y_{n+1}\leq \frac{c2^{(\alpha+1)pn}}{R^{N\alpha}k^{p\alpha}}	y_n^{1+\alpha}.
	$$
	Now we are in a position to apply Lemma \ref{ynb}.  To do this, we further require  $k$ to satisfy
	$$
	y_0=\int_{B_{R}(x_0)}(u^+)^pdx\leq \left(\frac{c}{R^{N\alpha}k^{p\alpha}}\right)^{-\frac{1}{\alpha}}2^{-\frac{(\alpha+1)p}{\alpha^2}}=cR^Nk^p.
	$$
	Then
	$$
	\max_{B_{\frac{R}{2}}(x_0)}u\leq k\ \ \mbox{on $\Omega$.}
	$$
	In view of \eqref{aklb}, it is enough for us to take 
	$$
	k=c\left(\avint_{B_{R}(x_0)}(u^+)^pdx\right)^{\frac{1}{p}}+R\left(1+\tau^{\frac{1}{2}}\right)+\left(R^{\frac{ps-N}{ s}}\|f^+\|_ {s, B_R(x_0)}\right)^{\frac{1}{p-1}}.
	$$
	This implies the desired result.
\end{proof}

In the situation considered in Lemma \ref{l21} $u$ is also bounded in the whole domain and we have the estimate
\begin{equation}\label{of4}
	\|u^\pm\|_\infty \leq c\|u^\pm\|_1+\left(\|f^\pm\|_s\right)^{\frac{1}{p-1}}+\sqrt{\tau},
\end{equation}
where $c$ depends only on $N, p, s, \Omega$.
The proof is simpler, and we shall omit the details.

\begin{lem}
Let the hypotheses and notion of Lemma \ref{l21} hold. If, in addition, $f\in L^\ell(\Omega)$ with $\ell>N$, we have, for any ball $\br\subset\Omega$,
\begin{equation}\label{nub}
\sup_{\brh}|\nabla u|\leq c\avint_{B_{R}(x_0)}|\nabla u|dx+c\sqrt{\tau}
+\left(R^{1-\frac{N}{\ell}}\||f|+\tau|u|\| _{\ell, B_R(x_0)}\right)^{\frac{1}{p-1}}+c,	
\end{equation}
 where $c=c(N,\ell, p)$.
\end{lem}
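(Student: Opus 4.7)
The target is a Moser-type $L^\infty$ bound on $|\nabla u|$, so the plan is to linearize the equation in the direction of each partial derivative, derive a differential inequality for the scalar quantity $v:=|\nabla u|^2+\tau$, and then run a De Giorgi iteration parallel to the one in the proof of Lemma \ref{l21}. As in that proof, the $E_\tau$-regularization makes the equation uniformly elliptic in $\nabla u$, so standard elliptic regularity gives $u\in W^{2,2}_{\textup{loc}}(\Omega)$ and all the manipulations below can be justified rigorously (otherwise carry them out at the level of yet another regularization and pass to the limit at the end).

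First, differentiate \eqref{of1} with respect to $x_k$ and multiply by $u_{x_k}$, then sum over $k$. Writing $g=f-\tau u$ and $b_{ij}:=F_\tau(v-\tau)\delta_{ij}+2F_\tau'(v-\tau)u_{x_i}u_{x_j}$ for the Hessian of $E_\tau$ at $\nabla u$, one obtains the identity
\begin{equation}
-\tfrac{1}{2}\partial_j(b_{ij}\partial_i v)+b_{ij}u_{x_i x_k}u_{x_j x_k}+\tau|\nabla u|^2
=\sum_k g_{x_k}u_{x_k},\nonumber
\end{equation}
which in weak form becomes, for $\phi\geq 0$,
\begin{equation}
\tfrac12\!\int b_{ij}\partial_i v\,\partial_j\phi\,dx
+\int b_{ij}u_{x_ix_k}u_{x_jx_k}\,\phi\,dx+\tau\!\int |\nabla u|^2\phi\,dx
=-\!\int g\,\partial_k(u_{x_k}\phi)\,dx.\nonumber
\end{equation}
For $p\in(1,2]$ the calculation sketched after \eqref{uni} gives the pointwise ellipticity $b_{ij}\xi_i\xi_j\ge (p-1)(v+\tau)^{(p-2)/2}|\xi|^2$ (the negative $F_\tau'$ contribution from the 1-Laplacian is absorbed by the $p$-Laplace part, exactly as in Lemma \ref{l21}), and the same computation yields $b_{ij}u_{x_ix_k}u_{x_jx_k}\ge c(p-1)(v+\tau)^{(p-2)/2}|D^2u|^2$.

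Now fix $\br\subset\Omega$ and $k>0$ to be chosen, and set
$$
R_n=\tfrac{R}{2}+\tfrac{R}{2^{n+1}},\quad k_n=k-\tfrac{k}{2^{n+1}},\quad
A_n=\{x\in\brn:v(x)\geq k_{n+1}^2+\tau\},\quad
y_n=\!\int_{\brn}\!\!\bigl[(v-k_n^2-\tau)^+\bigr]^{p/2}dx,
$$
together with a sequence of cut-offs $\theta_n$ as in the proof of Lemma \ref{l21}. Test the weak inequality above with $\phi=\theta_{n+1}^2(v-k_{n+1}^2-\tau)^+$. The principal term produces $b_{ij}\partial_i v\,\partial_j v$ on $A_n$, which is bounded below by $c(p-1)(v+\tau)^{(p-2)/2}|\nabla v|^2$; together with the Sobolev inequality this yields a Caccioppoli-type bound for $(v-k_{n+1}^2-\tau)^{+(p+2)/4}$. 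The forcing term is handled by integration by parts:
$$
\Bigl|\!\int g\,\partial_k(u_{x_k}\phi)\,dx\Bigr|
\le \int|g|\bigl(|D^2u|\phi+|\nabla u||\nabla\phi|\bigr)dx,
$$
and $|D^2u|\phi$ is absorbed into the positive $b_{ij}u_{x_ix_k}u_{x_jx_k}\phi$ term via Cauchy--Schwarz, producing a term of the form $c\||g|\|_{\ell,\br}^{2}|A_n|^{1-2/\ell}$ to which H\"older's inequality with exponent $\ell>N$ applies. Assembling the estimates as in Lemma \ref{l21} gives, for an exponent $\alpha>0$ that depends only on $N,p,\ell$, the recursion
$$
y_{n+1}\le \frac{c\,2^{(\alpha+1)pn}}{R^{N\alpha}k^{p\alpha}}\,y_n^{1+\alpha}
$$
provided $k\geq c\sqrt{\tau}+c+\bigl(R^{1-N/\ell}\||f|+\tau|u|\|_{\ell,\br}\bigr)^{1/(p-1)}$; Lemma \ref{ynb} then forces $y_n\to 0$ and hence $\sup_{\brh}v\le k^2+\tau$ once we also require $y_0\le cR^Nk^p$.

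The last requirement would naively produce an $L^p$-mean of $|\nabla u|$ on the right; to reduce it to the $L^1$-mean stated in \eqref{nub} I will run the standard interpolation iteration on concentric balls: apply the estimate just obtained on balls $B_{r}(x_0)$ and $B_{r'}(x_0)$ with $R/2<r<r'<R$, use Young's inequality $\|{\cdot}\|_p\le\varepsilon\|{\cdot}\|_\infty^{1-1/p}\|{\cdot}\|_1^{1/p}$, and iterate to absorb the sup on the right, yielding \eqref{nub} with the $L^1$-average as stated. The main obstacle I anticipate is the careful bookkeeping of the negative $F_\tau'$ contribution (the 1-Laplacian piece) in the ellipticity of $b_{ij}$ and in the absorption step for $|D^2u|$: these have to be dominated uniformly in $\tau$ by the $p$-Laplace ellipticity, which is precisely the mechanism highlighted after Lemma \ref{l21}.
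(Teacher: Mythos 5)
Your overall scaffold — differentiate the equation, test the resulting inequality for $w=|\nabla u|^2$ with $\theta_{n+1}^2(\text{truncation})$, run a De Giorgi iteration via Lemma~\ref{ynb}, and finish with a nested-ball interpolation step to trade the $L^p$-mean of $|\nabla u|$ for the $L^1$-mean — is the same as the paper's. But you handle the forcing term by a genuinely different mechanism, and this is the crux of the lemma because it is what dictates that only $f\in L^\ell$ (and not $\nabla f$) is needed. The paper \emph{discards} the nonnegative term $\int b_{ij}u_{x_ix_k}u_{x_jx_k}\phi\,dx\geq 0$ from the Bochner-type identity, keeps the bound \eqref{w1}, and then recasts $2\nabla f\cdot\nabla u=2\nabla f\cdot F_\tau(w)\nabla u/F_\tau(w)$ so that one integration by parts lets it substitute the equation $\mdiv(F_\tau(w)\nabla u)=\tau u-f$; after that only $f$ appears (terms $I_1,I_2,I_3$). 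You instead \emph{retain} the Hessian term, integrate $\int g_{x_k}u_{x_k}\phi$ by parts to $-\int g\,\partial_k(u_{x_k}\phi)$, and absorb the resulting $|D^2u|\phi$ factor into the kept Hessian term via Cauchy--Schwarz. Both routes are sound; yours is the more classical Caccioppoli/De Giorgi argument for $p$-Laplace-type equations, whereas the paper's substitution $v=(w+\tau)^{p/2}$ turns the weighted ellipticity $(w+\tau)^{(p-2)/2}|\nabla w|^2$ into a \emph{constant}-coefficient $|\nabla v|^2$, which makes all the exponent bookkeeping markedly cleaner. A related slip in your write-up: your declared $y_n=\int[(v-k_n^2-\tau)^+]^{p/2}dx$ does not match the Caccioppoli power you derive — since $(v+\tau)^{(p-2)/2}|\nabla(v-K)^+|^2\approx|\nabla((v-K)^+)^{(p+2)/4}|^2$, Sobolev gives $L^{(p+2)N/(2(N-2))}$ control on $(v-K)^+$, so the natural recursion quantity is $\int[(v-k_n^2-\tau)^+]^{(p+2)/2}dx$, not the $p/2$-power; and the residual term $\int(v+\tau)^{(2-p)/2}|g|^2\phi$ after absorbing $|D^2u|$ carries a higher $v$-power ($\approx(w+\tau)^{(4-p)/2}$) than the paper's $I_1$ ($\approx w+\tau$), so the H\"older split against $\|g\|_\ell$ and the lower bound on $y_n$ in terms of $|A_n|$ must be recalibrated consistently. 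These are repairable exponent errors, not a defect of the method.
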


This lemma is more or less known. See, e.g., \cite{T2}. We will offer a simpler proof here.
\begin{proof}
	To obtain \eqref{nub}, we first derive a differential inequality satisfied by
	$$
	w=|\nabla u|^2.
	$$
	To this end, we first observe that our solution $u$ actually lies in $W^{2,\ell}(\Omega)$ \cite{LU,X2}. Thus we can	differentiate \eqref{of1} with respect to $x_j, j\in \{1, \cdots, N\}$, to derive
	$$
	-\mbox{div}\left(\nabla_z^2 E_\tau(\nabla u)\nabla u_{x_j}\right)+\tau u_{x_j} =f_{x_j}.
	$$
	It is easy to verify
	\begin{equation}\label{3}
		\nabla^2E_\tau(z)=(|z|^2+\tau)^{\frac{p-2}{2}}\left(I+(p-2)\frac{z\otimes z}{|z|^2+\tau}\right)+\beta_0(|z|^2+\tau)^{-\frac{1}{2}}\left(I-\frac{z\otimes z}{|z|^2+\tau}\right)
		.
	\end{equation}
Consequently,
\begin{eqnarray}
	\nabla^2E_\tau(\nabla u) z \cdot z &=&(w+\tau)^{\frac{p-2}{2}}\left(| z |^2+(p-2)\frac{(\nabla u\cdot z )^2}{w+\tau}\right)\nonumber\\
	&&+\beta_0(w+\tau)^{-\frac{1}{2}}\left(| z |^2-\frac{(\nabla u\cdot z )^2}{w+\tau}\right) \nonumber\\
		&\geq &(w+\tau)^{\frac{p-2}{2}}\left(| z |^2-(2-p)^+\frac{w| z |^2}{w+\tau}\right)+\beta_0(w+\tau)^{-\frac{1}{2}}\left(| z |^2-\frac{w| z |^2}{w+\tau}\right)\nonumber\\
		&\geq&	(1-(2-p)^+)(w+\tau)^{\frac{p-2}{2}}| z |^2.\label{co}
	\end{eqnarray}
	We also have
	\begin{eqnarray}
		|\nabla^2E_\tau(\nabla u)|\leq 	c\left[(w+\tau)^{\frac{p-2}{2}}+\beta_0(w+\tau)^{-\frac{1}{2}}\right].\label{co1}
	\end{eqnarray}
	Multiply through \eqref{3} by $u_{x_j}$ to obtain
	\begin{eqnarray}
		-\frac{1}{2}\mbox{div}\left(\partial_z^2 E_\tau(\nabla u)\nabla u_{x_j}^2\right)+\partial_z^2 E_\tau(\nabla u)\nabla u_{x_j}\cdot \nabla u_{x_j}
		+\tau u_{x_j}^2
		=f_{x_j}u_{x_j}.\label{4}
	\end{eqnarray}
	By \eqref{co},
	\begin{equation}
		\nabla_z^2 E_\tau(\nabla u)\nabla u_{x_j}\cdot \nabla u_{x_j}\geq 0.\nonumber
	\end{equation}
	Use this in \eqref{4}, sum up the resulting inequality over $j$, and thereby obtain
	\begin{equation}\label{w1}
		-\mbox{div}\left(\nabla_z^2 E_\tau(\nabla u)\nabla w\right)\leq 2\nabla f\nabla u.
	\end{equation}

Let $R, x_0$ be given as in the lemma. Fix $\sigma\in(0,1)$. Let 
\begin{equation}
	R_n=(1-\sigma)R+\frac{\sigma R}{2^{n}},\ \ \ n=0,1,2,\cdots.\nonumber
\end{equation}
Define $\theta_n$ as before. The only difference is that we now have
\begin{eqnarray}
	|\nabla \theta_{n+1}(x)|&\leq & \frac{c2^n}{\sigma R}\ \ \mbox{for each $x\in \mathbb{R}^N$.}\nonumber
\end{eqnarray}
	Select
	\begin{equation}\nonumber
		K>2
	\end{equation}
	as below.
	Set
	\begin{eqnarray}
			v&=&(w+\tau)^{\frac{p}{2}}.\nonumber
	\end{eqnarray}
Then define
	\begin{eqnarray}
		K_n&=&K-\frac{K}{2^{n+1}},\ \ Q_n=\{x\in\brn: v\geq K_{n+1}\}\ \ \ n=0,1,2,\cdots\label{qn}
		\end{eqnarray}
Obviously,
\begin{equation}\label{conK2}
	K_n\geq 1\ \ \mbox{for all $n$.}
\end{equation} We continue to assume \eqref{conpn}. Use $\theta_{n+1}^2( v-K_{n+1})^+$ as a test function in \eqref{w1} to derive
	\begin{eqnarray}
		\lefteqn{\io\theta_{n+1}^2\nabla^2E_\tau(\nabla u)\nabla w\cdot\nabla( v-K_{n+1})^+dx}\nonumber\\
		&\leq&-2\io\theta_{n+1}\nabla\theta_{n+1}\cdot \nabla^2E_\tau(\nabla u)\nabla w( v-K_{n+1})^+dx\nonumber\\
		&&+2\io \nabla f\cdot \nabla u\theta_{n+1}^2( v-K_{n+1})^+dx.\label{th1}
	\end{eqnarray}
	Now we proceed to analyze each term in the above inequality. In view of \eqref{co}, we have 
	\begin{eqnarray}
		\lefteqn{	\io\theta_{n+1}^2\nabla^2E_\tau(\nabla u)\nabla w\cdot\nabla( v-K_{n+1})^+dx}\nonumber\\
		&=&\frac{2}{p}\io\theta_{n+1}^2(w+\tau)^{1-\frac{p}{2}}\nabla^2E_\tau(\nabla u)\nabla ( v-K_{n+1})^+\cdot\nabla( v-K_{n+1})^+dx\nonumber\\
		&\geq&\frac{2(p-1)}{p}\io\theta_{n+1}^2|\nabla( v-K_{n+1})^+|^2dx.\label{th2}
	\end{eqnarray}
Note from  \eqref{conK2} and \eqref{qn} that
$$(w+\tau)^{-\frac{p-1}{2}}=v^{-\frac{p-1}{p}}\leq 1 \ \ \mbox{in $Q_n$.}$$
This together with \eqref{co1} enables us to estimate the second term in \eqref{th1} as follows:
	\begin{eqnarray}
		\lefteqn{-2\io\theta_{n+1}\nabla\theta_{n+1}\cdot \nabla^2E_\tau(\nabla u)\nabla w( v-K_{n+1})^+dx}\nonumber\\
		&\leq &\frac{c2^n}{\sigma R}\io\theta_{n+1}\left[1+\beta_0(w+\tau)^{-\frac{p-1}{2}}\right]|\nabla( v-K_{n+1})^+|( v-K_{n+1})^+dx\nonumber\\
		&\leq &\varepsilon\io\theta_{n+1}^2|\nabla( v-K_{n+1})^+|^2dx+\frac{c(\varepsilon)4^n}{(\sigma R)^2}\int_{\brn}\left[( v-K_{n+1})^+\right]^2dx.\nonumber
	\end{eqnarray}
	As for the last integral in \eqref{th1}, we recall from
	\eqref{of1} that
	$$
	\mdiv\left(F_\tau(w)\nabla u\right)=\tau u-f.
	$$
	Consequently,
	\begin{eqnarray}
		\mbox{the last term in \eqref{th1}}&=&2\io \nabla f\cdot \nabla u\theta_{n+1}^2( v-K_{n+1})^+dx\nonumber\\
		&=&2\io \nabla f\cdot F_\tau(w)\nabla u\theta_{n+1}^2\frac{( v-K_{n+1})^+}{F_\tau(w)}dx\nonumber\\
		&=&-2\io f(\tau u-f)\theta_{n+1}^2\frac{( v-K_{n+1})^+}{F_\tau(w)}dx\nonumber\\
		&&-4\io  f \nabla u\cdot\theta_{n+1}\nabla\theta_{n+1}( v-K_{n+1})^+dx\nonumber\\
		&&-2\io  \theta_{n+1}^2f \nabla u\cdot F_\tau(w)\nabla \frac{( v-K_{n+1})^+}{F_\tau(w)}dx\nonumber\\
		&\equiv& I_1+I_2+I_3.\nonumber
	\end{eqnarray}
	We easily see from \eqref{ftd} that $$\frac{1}{F_\tau(w)}\leq (w+\tau)^{\frac{2-p}{2}}=v^{\frac{2-p}{p}}$$ Hence,
	\begin{eqnarray}
		I_1&\leq&c\io|(\tau u-f)f|\theta_{n+1}^2v^{\frac{2-p}{p}}( v-K_{n+1})^+dx\nonumber\\
		&\leq &c\int_{Q_n}(f^2+\tau^2u^2)v^{\frac{2}{p}}dx,\nonumber
	\end{eqnarray}
	where $Q_n$ is given as in \eqref{qn}.
	Similarly,
	\begin{eqnarray}
		I_2&\leq&\frac{c2^n}{\sigma R}\io  |f|w^{\frac{1}{2}}\theta_{n+1}( v-K_{n+1})^+dx\nonumber\\
		&\leq&\frac{c2^n}{\sigma R}\io  |f| v^{\frac{1}{p}}\theta_{n+1}( v-K_{n+1})^+dx\nonumber\\
		&\leq&\frac{c4^n}{(\sigma R)^2}\int_{Q_n}\left[( v-K_{n+1})^+\right]^2dx+c\int_{Q_n}f^2v^{\frac{2}{p}}dx.\nonumber
	\end{eqnarray}
	To estimate $I_3$, we observe that
	$$
	\frac{(v-K_{n+1})^+}{F_\tau(w)}=\left.\frac{s}{(s+K_{n+1})^{-\frac{2-p}{p}}+\beta_0(s+K_{n+1})^{-\frac{1}{p}}}\right|_{s=(v-K_{n+1})^+}
	$$
	Then we can easily check
	$$
	\left|\frac{d}{ds}\left(\frac{s}{(s+K_{n+1})^{-\frac{2-p}{p}}+\beta_0(s+K_{n+1})^{-\frac{1}{p}}}\right)\right|\leq \frac{c}{(s+K_{n+1})^{-\frac{2-p}{p}}+\beta_0(s+K_{n+1})^{-\frac{1}{p}}}.
	$$
	This immediately implies that
	$$
	\left|F_\tau(w)\nabla \frac{( v-K_{n+1})^+}{F_\tau(w)}\right|\leq c\left|\nabla(v-K_{n+1})^+\right|.
	$$
	Subsequently,
	\begin{eqnarray}
		I_3&=&-2\io  \theta_{n+1}^2f \nabla u\cdot F_\tau(w)\nabla \frac{( v-K_{n+1})^+}{F_\tau(w)}dx\nonumber\\
		&\leq &\varepsilon\io \theta_{n+1}^2|\nabla ( v-K_{n+1})^+|^2dx+c\int_{Q_n}f^2v^{\frac{2}{p}}dx.
		\label{th6}	
	\end{eqnarray}
	With the aid of \eqref{th2}-\eqref{th6}, we can deduce from \eqref{th1} that
	\begin{eqnarray}
		\lefteqn{\io\theta_{n+1}^2|\nabla( v-K_{n+1})^+|^2dx}\nonumber\\
		&\leq&\frac{c4^n}{(\sigma R)^2}\int_{Q_n}\left[( v-K_{n+1})^+\right]^2dx
		+c\int_{Q_n}(f^2+\tau^2u^2)v^{\frac{2}{p}}dx.\label{haa1}
	\end{eqnarray}
	Now set 
	$$
	y_n=\int_{B_{R_{n}}(x_0)}\left[( v-K_{n})^+\right]^{2}dx.
	$$
	We wish to show that the sequence $\{y_n\}$ satisfies \eqref{ynb1}. To this end, we estimate 
	\begin{eqnarray}
		y_n&\geq&\int_{Q_n}v^{2}\left(1-\frac{K_n}{K_{n+1}}\right)^{2}dx\geq\frac{1}{4^{n+2}}\int_{Q_n}v^{2}dx.\nonumber
	\end{eqnarray}
	Consequently,
	\begin{eqnarray}
		\int_{Q_n} (f^2+\tau^2u^2)v^{\frac{2}{p}}dx
		&\leq&\left(\int_{Q_n}v^{2}dx\right)^{\frac{1}{p}}\left(\int_{Q_n}(f^2+\tau^2u^2)^{\frac{p}{p-1}}dx\right)^{\frac{p-1}{p}}\nonumber\\
		&\leq&4^{\frac{2+n}{p}}y_n^{\frac{1}{p}}\||f|+\tau|u|\| _{\ell, B_R(x_0)}^2|Q_n|^{\frac{p-1}{p}-\frac{2}{\ell}}.\nonumber
		\end{eqnarray}
	Substituting this into \eqref{haa1} yields
	\begin{eqnarray}
		\io\theta_{n+1}^2|\nabla( v-K_{n+1})^+|^2dx\leq\frac{c4^n}{(\sigma R)^2}y_n+4^{\frac{2+n}{p}}y_n^{\frac{1}{p}}\||f|+\tau|u|\| _{\ell, B_R(x_0)}^2|Q_n|^{\frac{p-1}{p}-\frac{2}{\ell}}.\label{haa2}
	\end{eqnarray}
	By Poincar\'{e}'s inequality,
	\begin{eqnarray}
		y_{n+1}&\leq &\io \left[\theta_{n+1}( v-K_{n+1})^+\right]^{2}dx\nonumber\\
		&\leq&\left(\io \left[\theta_{n+1}( v-K_{n+1})^+\right]^{\frac{2N}{N-2}}dx\right)^{\frac{N-2}{N}}|Q_n|^{\frac{2}{N}}\nonumber\\
		&\leq &c\io \left|\nabla\left(\theta_{n+1}( v-K_{n+1})^+\right)\right|^2dx|Q_n|^{\frac{2}{N}}\nonumber\\
		&\leq&c\io \theta_{n+1}^2\left|\nabla( v-K_{n+1})^+\right|^2dx|Q_n|^{\frac{2}{N}}+\frac{c4^n}{(\sigma R)^2}y_n|Q_n|^{\frac{2}{N}}.\nonumber
	\end{eqnarray}
	This combined with \eqref{haa2} yields
	\begin{eqnarray}
		y_{n+1}&\leq&\frac{c4^n}{(\sigma R)^2}y_n|Q_n|^{\frac{2}{N}}+4^{\frac{2+n}{p}}y_n^{\frac{1}{p}}\||f|+\tau|u|\| _{\ell, B_R(x_0)}^2|Q_n|^{\frac{p-1}{p}-\frac{2}{\ell}+\frac{2}{N}}\nonumber\\
		&\equiv&N_1+N_2.\nonumber
	\end{eqnarray}
	Note that
	\begin{eqnarray}
		y_n&\geq& \int_{Q_n}(K_{n+1}-K_{n})^2dx\geq\frac{K^2}{4^{n+2}}|Q_n|.\nonumber
	\end{eqnarray}
	Recall that $\frac{2(\ell-N)}{N\ell}>0$. Thus we can pick a number $\alpha$ so that
	$$0<\alpha\leq \min\left\{\frac{2}{N}, \frac{2(\ell-N)}{N\ell}\right\}.$$
	It immediately follows that
	\begin{eqnarray}
		N_1&=&\frac{c4^n}{(\sigma R)^2}y_n|Q_n|^{\alpha+\frac{2}{N}-\alpha}\leq \frac{c4^{(1+\alpha)n}}{\sigma^2R^{N\alpha}K^{2\alpha}}y_n^{1+\alpha},\nonumber\\
	N_2	&\leq&\frac{4^{n(1+\alpha)}R^{\frac{2(\ell-N)}{\ell}}\||f|+\tau|u|\| _{\ell, B_R(x_0)}^2}{R^{N\alpha}K^{2\left(1-\frac{1}{p}+\alpha\right)}}y_n^{1+\alpha}\nonumber\\
	&\leq&\frac{c4^{(1+\alpha)n}}{R^{N\alpha}K^{2\alpha}}y_n^{1+\alpha}\ \ \mbox{if $R^{\frac{p(\ell-N)}{\ell(p-1)}}\||f|+\tau|u|\| _{\ell, B_R(x_0)}^{\frac{p}{p-1}}\leq K   $} .\nonumber
	\end{eqnarray}
Choose $K$ so large that
\begin{equation}\label{klb2}
	K\geq 2+R^{\frac{p(\ell-N)}{\ell(p-1)}}\||f|+\tau|u|\| _{\ell, B_R(x_0)}^{\frac{p}{p-1}}.
\end{equation}
Then we have
	\begin{eqnarray}
		y_{n+1}\leq\frac{c4^{\left(1+\alpha\right)n}}{\sigma^2R^{N\alpha}K^{2\alpha}}y_n^{1+\alpha}.\nonumber
	\end{eqnarray}
	By Proposition \ref{ynb}, if we further require $K$ to satisfy
	$$
	y_0\leq c\sigma^{\frac{2}{\alpha}}K^{2}R^{N},
	$$
	then
	\begin{equation}\label{non1}
	\sup_{B_{(1-\sigma)R}(x_0)}v\leq K.
	\end{equation}
	In view of \eqref{klb2}, it is enough for us to take
	\begin{eqnarray}
		K&=&\frac{c}{\sigma^{\frac{1}{\alpha}}}\left(\frac{y_0}{R^N}\right)^{\frac{1}{2}}+\left(R^{1-\frac{N}{\ell}}\||f|+\tau|u|\| _{\ell, B_R(x_0)}\right)^{\frac{p}{p-1}}+2.\label{non2}
	\end{eqnarray}
	Recall that 
	$$
	y_0=\int_{B_{R}(x_0)}\left[\left( v-\frac{K}{2}\right)^+\right]^2dx\leq \int_{B_{R}(x_0)}\left(w+\tau\right)^pdx\leq c\int_{B_{R}(x_0)}|\nabla u|^{2p}dx+c\tau^pR^N.
	$$
	Hence, by \eqref{non1} and \eqref{non2},
	\begin{eqnarray}
		\sup_{B_{(1-\sigma)R}(x_0)}|\nabla u|&\leq& \sup_{B_{(1-\sigma)R}(x_0)}v^{\frac{1}{p}}\leq K^{\frac{1}{p}}\nonumber\\
		&\leq& \frac{c}{\sigma^{\frac{1}{\alpha p}}}\left[\left(\avint_{B_{R}(x_0)}|\nabla u|^{2p}dx\right)^{\frac{1}{2p}}+\sqrt{\tau}\right]+\left(R^{1-\frac{N}{\ell}}\||f|+\tau|u|\| _{\ell, B_R(x_0)}\right)^{\frac{1}{p-1}}+c.\label{fr}
	\end{eqnarray}
	According to the interpolation inequality \eqref{linterp}, we have
	$$\|\nabla u\|_{2p, \br}\leq \ve\|\nabla u\|_{\infty,\br}+\frac{1}{\ve^{2p-1}}\|\nabla u\|_{1,\br},\ \ \ve>0.$$
	Use this in \eqref{fr} appropriately to get
	\begin{eqnarray}
		\sup_{B_{(1-\sigma)R}(x_0)}|\nabla u|&\leq& \ve\|\nabla u\|_{\infty,\br}+ \frac{c}{\ve^{2p-1}\sigma^{\frac{2}{\alpha }}}\avint_{B_{R}(x_0)}|\nabla u|dx\nonumber\\
		&&+\frac{c\sqrt{\tau}}{\sigma^{\frac{1}{\alpha p}}}+\left(R^{1-\frac{N}{\ell}}\||f|+\tau|u|\| _{\ell, B_R(x_0)}\right)^{\frac{1}{p-1}}+c.\nonumber
	\end{eqnarray}
Replace $R$ by $R_{n+1}\equiv R-\frac{R}{2^{n+2}}$ and take $\sigma=\frac{1}{2^{n+2}-1}$ to get
\begin{eqnarray}
	\sup_{\brn}|\nabla u|&\leq& \ve\sup_{\brno}|\nabla u|+ \frac{c2^{\frac{2n}{\alpha }}}{\ve^{2p-1}}\avint_{B_{R}(x_0)}|\nabla u|dx\nonumber\\
	&&+c\sqrt{\tau}2^{\frac{n}{\alpha p}}+\left(R^{1-\frac{N}{\ell}}\||f|+\tau|u|\| _{\ell, B_R(x_0)}\right)^{\frac{1}{p-1}}+c.\nonumber
\end{eqnarray}
 By iteration (\cite{D}, p.13),
 \begin{eqnarray}
 	\sup_{\brh}|\nabla u|&\leq&\ve^n\sup_{\brn}|\nabla u|+\left(\frac{c}{\ve^{2p-1}}\avint_{B_{R}(x_0)}|\nabla u|dx+c\sqrt{\tau}\right)\sum_{i=0}^{n-1}\left(2^{\frac{2}{\alpha }}\ve\right)^i\nonumber\\
 	&&+\left[\left(R^{1-\frac{N}{\ell}}\||f|+\tau|u|\| _{\ell, B_R(x_0)}\right)^{\frac{1}{p-1}}+c\right]\sum_{i=0}^{n-1}\ve^i.\nonumber
 \end{eqnarray}
Choose $\ve$ suitably small and then take $n\ra\infty$ to get
\begin{equation}
		\sup_{\brh}|\nabla u|\leq c\avint_{B_{R}(x_0)}|\nabla u|dx+c\sqrt{\tau}
+c\left(R^{1-\frac{N}{\ell}}\||f|+\tau|u|\| _{\ell, B_R(x_0)}\right)^{\frac{1}{p-1}}+c.\nonumber
\end{equation}
This completes the proof.
\end{proof} 
Further regularity results for solutions to equations of the p-Laplace type can be found in \cite{AZ,T} and the references therein. 

Given that
\begin{equation}\label{fcon}
f\in L^2(\Omega),\ \ \tau>0,	
\end{equation}
consider the boundary value problem
\begin{eqnarray}
	-\dest+\tau\ln\rho &=&f\ \  \mbox{in $\Omega$},\label{ae1}\\
	\nabla\rho\cdot\nu&=&0\ \ \mbox{on $\po$.}\label{ae2}
\end{eqnarray}
We refer the reader to \cite{BS,S} for the case $f\in L^1(\Omega)$. The main source of mathematical difficulty here is  the set where $\rho=0$.

A weak solution to \eqref{ae1}-\eqref{ae2} is a function $\rho\in W^{1,2}(\Omega)$ such that
\begin{eqnarray}
	\rho>0 \ \ \mbox{a.e. on $\Omega$},\ \ \ln\rho &\in& L^{2}(\Omega), \ \ \mbox{and}\nonumber\\
	\io\nabla\rho\nabla\varphi dx+\tau\io\ln\rho \varphi dx&=&\io f\varphi dx\ \ \mbox{for each $\varphi\in  W^{1,2}(\Omega).$}\nonumber
\end{eqnarray}

\begin{lem}\label{uex}
	Let \eqref{fcon} be satisfied. 
	Then there is a unique weak solution to \eqref{ae1}-\eqref{ae2}. Moreover, for each $\lambda\geq 1$ we have $\tau\ln\rho\in L^\lambda(\Omega)$ whenever $f\in L^\lambda(\Omega)$ and
	\begin{equation}\label{c21}
		\|\tau\ln\rho\|_\lambda\leq 	\|f\|_\lambda.
	\end{equation}
\end{lem}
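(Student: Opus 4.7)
The plan is to construct $\rho$ by approximation. For each integer $k\geq 1$, replace $\tau\ln$ with a Lipschitz, monotone nondecreasing function $g_k:\mathbb{R}\to\mathbb{R}$ that coincides with $\tau\ln s$ for $s\geq 1/k$; concretely, take $g_k(s)=\tau\ln(s+1/k)$ on $[0,\infty)$ and extend linearly with slope $\tau k$ for $s<0$. The approximating problem
$$
-\Delta\rho_k+g_k(\rho_k)=f\ \ \text{in }\Omega,\qquad \nabla\rho_k\cdot\nu=0\ \ \text{on }\partial\Omega
$$
admits a unique weak solution in $W^{1,2}(\Omega)$, obtained by minimizing the strictly convex, coercive functional $\rho\mapsto \tfrac{1}{2}\int_\Omega|\nabla\rho|^2\,dx+\int_\Omega G_k(\rho)\,dx-\int_\Omega f\rho\,dx$ with $G_k'=g_k$ (alternatively via Lemma \ref{lsf} together with the standard monotonicity argument); the weak maximum principle then gives $\rho_k\geq 0$.

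The crucial a priori estimate is obtained by testing the approximating equation with the admissible function $|g_k(\rho_k)|^{\lambda-2}g_k(\rho_k)$. Since $g_k$ is nondecreasing,
$$
\int_\Omega\nabla\rho_k\cdot\nabla\bigl(|g_k(\rho_k)|^{\lambda-2}g_k(\rho_k)\bigr)\,dx=(\lambda-1)\int_\Omega|g_k(\rho_k)|^{\lambda-2}g_k'(\rho_k)|\nabla\rho_k|^2\,dx\geq 0,
$$
so Hölder's inequality yields
$$
\int_\Omega|g_k(\rho_k)|^\lambda\,dx\leq \int_\Omega f\,|g_k(\rho_k)|^{\lambda-2}g_k(\rho_k)\,dx\leq\|f\|_\lambda\|g_k(\rho_k)\|_\lambda^{\lambda-1},
$$
hence $\|g_k(\rho_k)\|_\lambda\leq\|f\|_\lambda$. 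Taking $\lambda=2$ and combining with testing by $\rho_k$ (using $sg_k(s)\geq -C$ on $[0,\infty)$ and Poincaré's inequality applied to $\rho_k$ minus its mean, whose average is controlled by integrating the PDE) produces a uniform $W^{1,2}(\Omega)$-bound on $\rho_k$.

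Passing to the limit, extract subsequences so that $\rho_k\rightharpoonup\rho$ weakly in $W^{1,2}(\Omega)$ and a.e., and $g_k(\rho_k)\rightharpoonup\xi$ weakly in $L^\lambda(\Omega)$. The \emph{main obstacle} is to show that $\rho>0$ a.e.\ and that $\xi=\tau\ln\rho$, since \emph{a priori} $\rho$ could vanish on a set of positive measure and render $\ln\rho$ meaningless. The uniform $L^\lambda$-bound on $g_k(\rho_k)$ is decisive here: on any subset where $\rho_k\to 0$ one has $g_k(\rho_k)\to-\infty$, so Fatou's lemma applied to $|g_k(\rho_k)|^\lambda$ forces $|\{\rho=0\}|=0$. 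Once $\rho>0$ a.e., the pointwise convergence of $\rho_k$ gives $g_k(\rho_k)\to\tau\ln\rho$ a.e.; uniform integrability furnished by the $L^\lambda$-bound (together with Vitali's convergence theorem) then identifies $\xi=\tau\ln\rho$ and passes the estimate \eqref{c21} to the limit via lower semicontinuity of the norm.

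Uniqueness is immediate from monotonicity: if $\rho_1,\rho_2$ are two weak solutions, testing the difference of the equations with $\rho_1-\rho_2$ gives
$$
\int_\Omega|\nabla(\rho_1-\rho_2)|^2\,dx+\tau\int_\Omega(\ln\rho_1-\ln\rho_2)(\rho_1-\rho_2)\,dx=0,
$$
and the strict monotonicity of $\ln$ forces $\rho_1\equiv\rho_2$.
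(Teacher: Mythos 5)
Your overall strategy mirrors the paper's: regularize the logarithm, test with a signed power of the nonlinearity, pass to the limit with Fatou, and conclude uniqueness from strict monotonicity. However the proof has a genuine gap at the central estimate. You test with $|g_k(\rho_k)|^{\lambda-2}g_k(\rho_k)$ and assert it is admissible for every $\lambda\ge 1$. For $\lambda=1$ this is the discontinuous sign function $\mathrm{sgn}(g_k(\rho_k))$, which is not in $W^{1,2}(\Omega)$, and for $\lambda\in(1,2)$ the map $t\mapsto|t|^{\lambda-2}t$ is only H\"older of exponent $\lambda-1<1$, so the chain rule you invoke and the displayed formula for $\int\nabla\rho_k\cdot\nabla(|g_k|^{\lambda-2}g_k)$ are not justified. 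The lemma claims the bound for \emph{all} $\lambda\ge 1$, so these cases cannot be ignored; in fact the $\lambda=1$ bound is exactly what the paper uses to show $|\{\rho_\delta\le 0\}|\to 0$ before invoking Poincar\'e's inequality. The paper resolves this by testing with $(\tau|\psi_\delta(\rho_\delta)|+\varepsilon)^{\lambda-1}h_\varepsilon(\rho_\delta-s_z)$, where $h_\varepsilon$ is a Lipschitz truncation of the sign function and the added $\varepsilon$ regularizes the power, and then lets $\varepsilon\to 0$; you need some such double regularization.

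Two further steps are asserted without adequate support and differ from what the paper actually proves. First, "the weak maximum principle then gives $\rho_k\ge 0$'' does not follow: testing with $\rho_k^-$ yields $\int|\nabla\rho_k^-|^2=\int(f-g_k(\rho_k))\rho_k^-$, and since $f\in L^2(\Omega)$ need not be bounded below, the right-hand side has no sign; the paper deliberately avoids nonnegativity of the approximants and instead tracks $\|\psi_\delta(\rho_\delta)\|_1$ to control the measure of $\{\rho_\delta\le 0\}$. Second, "whose average is controlled by integrating the PDE'' does not produce the needed bound: integrating gives $\int_\Omega g_k(\rho_k)\,dx=\int_\Omega f\,dx$, which controls the mean of $g_k(\rho_k)$ but, because $\ln$ is unbounded, gives no upper bound on the mean of $\rho_k$. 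The paper's $W^{1,2}$ bound requires an extra device: splitting $\|\rho_\delta\|_1$ into a negative part handled by Poincar\'e and a large part handled via $\int_{\{\rho_\delta\ge L\}}(\rho_\delta-s_z)\le\frac{1}{\ln(L+\delta)}\int\psi_\delta(\rho_\delta)(\rho_\delta-s_z)$, then absorbing by choosing $L$ large. Your argument would need something equivalent.
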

\begin{proof} For the existence part, we consider the approximate problem
	\begin{eqnarray}
		-\Delta\rho_\delta+\delta\rho_\delta+\tau\psi_{\delta}(\rho_\delta)&=&f\ \  \mbox{in $\Omega$},\label{ae3}\\
		\nabla\rho_\delta\cdot\nu&=&0\ \ \mbox{on $\po$,}\label{ae4}
	\end{eqnarray}
	where $\delta\in (0,1)$ and
	\begin{equation}
		\psi_{\delta}(s)=
		\left\{\begin{array}{ll}
			\ln\left(s+\delta\right)	&\mbox{if $s> 0$,}\\
			\ln\delta	&\mbox{if $s\leq 0$.}
			\end{array}\right.\nonumber
	\end{equation} 
Existence of a solution to this problem can be established via the  Leray-Schauder Theorem. To see this, we define an operator $B$ from $L^2(\Omega)$ into itself as follows: We say $w=B(v)$ if $w$ solves problem
	\begin{eqnarray}
	-\Delta w+\delta w&=&f-\tau\psi_{\delta}(v)\ \  \mbox{in $\Omega$},\label{ae5}\\
	\nabla w\cdot\nu&=&0\ \ \mbox{on $\po$.}\label{ae6}
\end{eqnarray}
Note that $\psi_{\delta}(s)\geq \ln\delta$. Thus for $v\in L^2(\Omega)$ we have $\psi_{\delta}(v)\in L^q(\Omega)$ for each $q\geq 1$. Problem \eqref{ae5}-\eqref{ae6} has a unique weak solution $w$ in the space $ W^{1,2}(\Omega)$. That is, $B$ is well-defined. It is also easy for us to see that $B$ is continuous and maps bounded sets into compact ones. Now we verify (LS3) in Lemma \ref{lsf}. Suppose that $\sigma\in (0,1),\ w\in L^2(\Omega)$ are such that
$w=\sigma B(w) $, i.e.,
\begin{eqnarray}
	-\Delta w+
	\delta w&=&\sigma(f-\tau\psi_{\delta}(w))\ \  \mbox{in $\Omega$},\label{ae7}\\
	\nabla w\cdot\nu&=&0\ \ \mbox{on $\po$.}\label{ae8}
\end{eqnarray}
Use $w$ as a test function in \eqref{ae7} to get
\begin{eqnarray}
	\io|\nabla w|^2dx+\delta\io w^2dx&=&\sigma\io f wdx-\sigma\tau\io (\psi_{\delta}(w)-\psi_{\delta}(0)) wdx\nonumber\\
	&&-\sigma\tau \psi_{\delta}(0)\io wdx \nonumber\\
	&\leq &\frac{\delta}{2}\io w^2dx+\frac{1}{2\delta}\io(f -\tau\psi_{\delta}(0))^2dx.\label{non3}
\end{eqnarray}
Here we have used the fact that $\psi_{\delta}$ is an increasing function, and thus $(\psi_{\delta}(w)-\psi_{\delta}(0)) w\geq 0$.
Inequality \eqref{non3} implies
$$\io w^2dx\leq c(\delta).$$
Thus (LS3) in Lemma \ref{lsf} holds. As a result, problem \eqref{ae3}-\eqref{ae4} has a solution.

Next, we proceed to show that we can take $\delta\rightarrow 0$ in \eqref{ae3}-\eqref{ae4}.
To this end, we first write \eqref{ae3} in the form
\begin{equation}\label{ae11}
	-\Delta\rho_\delta+\delta(\rho_\delta-s_z)+\tau\psi_{\delta}(\rho_\delta)=f-\delta s_z\ \ \mbox{in $\Omega$,}
\end{equation}
where $$s_z=1-\delta$$
is the zero point of $\psi_{\delta}$, i.e., $\psi_{\delta}(s_z)=0$.  Introduce the function
\begin{equation}
	h_\varepsilon(s)=\left\{\begin{array}{ll}
		1&\mbox{if $s>\varepsilon$,}\\
		s&\mbox{if $|s|\leq \varepsilon$,}\\
		-1&\mbox{if $s<-\varepsilon$,}\ \ \ \varepsilon>0.
	\end{array}\right.\nonumber
\end{equation}
Obviously,
$$
\lim_{\varepsilon\rightarrow 0}=\mbox{sgn}_0(s)\equiv\left\{\begin{array}{ll}
	1&\mbox{if $s>0$,}\\
	0&\mbox{if $s=0$,}\\
	-1&\mbox{if $s<0$.}
\end{array}\right.
$$
We establish \eqref{c21} first. Let $\lambda\in(1,\infty)$ be given.
Then  the function $ (\tau|\psi_{\delta}(\rho_\delta)|+\varepsilon)^{\lambda-1}h_\varepsilon(\rho_\delta-s_z)$ is an increasing function of $\rho_\delta$. We easily check that it lies in $W^{1,2}(\Omega)$. Use it as a test function in \eqref{ae11}
to obtain
\begin{eqnarray}
	\lefteqn{(\lambda-1)\int_\Omega(\tau|\psi_{\delta}(\rho_\delta)|+\varepsilon)^{\lambda-2} \tau \mbox{sgn}_0(\psi_{\delta}(\rho_\delta))\psi_{\delta}^\prime(\rho_\delta)	h_\ve(\rho_\delta -s_z)|\nabla  \rho_\delta |^2\, dx}\nonumber\\
	&&+\int_\Omega( \tau|\psi_{\delta}(\rho_\delta)|+\varepsilon)^{\lambda-1} 	h_\varepsilon^\prime(\rho_\delta-s_z)|\nabla  \rho_\delta |^2\, dx	\nonumber\\
	&&+\delta\int_\Omega(\tau|\psi_{\delta}(\rho_\delta)|+\varepsilon)^{\lambda-1} 	h_\ve(\rho_\delta -s_z) (\rho_\delta-s_z) \, dx\nonumber\\
	&&+\io \tau\psi_{\delta}(\rho_\delta) (\tau|\psi_{\delta}(\rho_\delta)|+\varepsilon)^{\lambda-1}h_\varepsilon(\rho_\delta-s_z)dx\nonumber\\
	&=&\int_\Omega (f-\delta s_z)(\tau|\psi_{\delta}(\rho_\delta)|+\varepsilon)^{\lambda-1} 	h_\ve(\rho_\delta -s_z)  \, dx\leq \int_\Omega |f-\delta s_z|(\tau|\psi_{\delta}(\rho_\delta)|+\varepsilon)^{\lambda-1}  \, dx.\nonumber
\end{eqnarray}
Dropping the first three integrals in the above inequality and then letting $\varepsilon\rightarrow 0$ yield 
\begin{equation}
	\io|\tau\psi_{\delta}(\rho_\delta)|^\lambda\leq \io|f-\delta s_z||\tau\psi_{\delta}(\rho_\delta)|^{\lambda-1}dx\leq \|f-\delta s_z\|_{\lambda}\| \tau\psi_{\delta}(\rho_\delta) \|_{\lambda}^{\lambda-1},\nonumber
\end{equation}
from whence follows
\begin{equation}\label{flm}
	\|\tau\psi_{\delta}(\rho_\delta)\|_\lambda\leq \|f-\delta s_z\|_\lambda.
\end{equation}
Take $\lambda=1$ and make use of the definition of $\psi_{\delta}$ to derive
$$|\{\rho_\delta \leq 0\}|=\frac{1}{|\ln \delta|}\int_{\{\rho_\delta\leq 0\}}|\psi_{\delta}(\rho_\delta)|dx\leq \frac{c}{\tau|\ln \delta|}\ra 0\ \ \mbox{as $\delta\ra 0$}.$$
Remember that $|\{\rho_\delta \leq 0\}|+|\{\rho_\delta>0\}|=|\Omega|$. Thus we can invoke Lemma \ref{poin} to get
\begin{equation}\label{er1}
	\|\rho_\delta^-\|_2\leq \frac{c}{|\{\rho_\delta>0\}|^{\frac{1}{2}}}\|\nabla\rho_\delta\|_2\leq c\|\nabla\rho_\delta\|_2\ \ \mbox{at least for $\delta$ sufficiently small}.
\end{equation}
Use $\rho_\delta-s_z$ as a test function in \eqref{ae11} to get
\begin{eqnarray}
\lefteqn{	\io|\nabla\rho_\delta|^2dx+\delta\io(\rho_\delta-s_z)^2dx+\tau\io\psi_{\delta}(\rho_\delta)(\rho_\delta-s_z)dx}\nonumber\\
&=&\io (f  -\delta s_z)(\rho_\delta-s_z) dx\nonumber\\
	&\leq &\|f  -\delta s_z\|_{\frac{2N}{N+2}}\|\rho_\delta-s_z\|_{\frac{2N}{N-2}}\nonumber\\
	&\leq &c\|f  -\delta s_z\|_{\frac{2N}{N+2}}\left(\|\nabla\rho_\delta\|_2+\|\rho_\delta-s_z\|_1\right)\nonumber\\
	&\leq&\frac{1}{2}\|\nabla\rho_\delta\|_2^2+c\|f  -\delta s_z\|_{\frac{2N}{N+2}}^2+c\|f  -\delta s_z\|_{\frac{2N}{N+2}}\|\rho_\delta-s_z\|_1.\nonumber
\end{eqnarray}
Consequently,
\begin{eqnarray}
\lefteqn{\io|\nabla\rho_\delta|^2dx+	\io\psi_{\delta}(\rho_\delta)(\rho_\delta-s_z) dx}\nonumber\\
&\leq& c\|f  -\delta s_z\|_{\frac{2N}{N+2}}\|\rho_\delta-s_z\|_1+c\|f  -\delta s_z\|_{\frac{2N}{N+2}}^2\nonumber\\
&\leq &c\|\rho_\delta-s_z\|_1+c.\label{sa4}
\end{eqnarray}
Fix $L>1$. It follows from \eqref{er1} that
\begin{eqnarray}
\io|\rho_\delta|dx&\leq&\io\rho_\delta^-dx+\int_{\{\rho_\delta\geq L \}}(\rho_\delta-s_z)dx+c(L)\nonumber\\	
	&\leq &c\|\nabla\rho_\delta\|_2
	+\frac{1}{\ln(L+\delta)}\io\psi_{\delta}(\rho_\delta)(\rho_\delta-s_z) dx+c(L)\nonumber\\
	&\leq &\frac{1}{L}\io|\nabla\rho_\delta|^2dx+\frac{1}{\ln(L+\delta)}\io\psi_{\delta}(\rho_\delta)(\rho_\delta-s_z) dx+c(L).\nonumber
\end{eqnarray}
Use this in \eqref{sa4},  choose $L$ suitably large in the resulting inequality, and thereby obtain that
there exists a $\delta_0\in (0,1)$ such that
\begin{equation}\label{non4}
\|\nabla\rho_\delta\|_2+	\|\rho_\delta\|_1\leq c\ \ \mbox{for all $\delta\leq \delta_0$}.
\end{equation}
 We may assume that
\begin{equation}
	\rho_\delta\rightarrow \rho\ \ \mbox{weakly in $W^{1,2}(\Omega)$, strongly in $L^2(\Omega)$, and a.e. on $\Omega$.}\nonumber
\end{equation}
Take $\lambda=2$ in \eqref{flm} to get
\begin{eqnarray}
	\|\tau\psi_{\delta}(\rho_\delta)\|_2
	&\leq &\|f-\delta s_z\|_2\leq c.\label{rrr2}
\end{eqnarray}
It is easy to check that for a.e. $x$ in $\Omega$ we have
$$\psi_{\delta}(\rho_\delta(x))\ra\left\{ \begin{array}{ll}
-\infty&\ \ \mbox{if  $x\in\{\rho\leq 0\}$.}	\\
\ln\rho(x)&\mbox{ if $\rho(x)>0$.}
\end{array}\right.$$
By Fatou's lemma, we have
$$\int_{\{\rho\leq 0\}}\lim_{\delta\rightarrow 0}|\psi_{\delta}(\rho_\delta)|dx\leq \lim_{\delta\rightarrow 0}\io|\psi_{\delta}(\rho_\delta)|dx\leq c.$$ 
We must have
$$\left|\{\rho\leq 0\}\right|=0\ \ \mbox{and $\psi_{\delta}(\rho_\delta(x))\ra \ln\rho\ \ \mbox{a.e. in $\Omega$}$.}$$
This together with \eqref{rrr2} implies
$$\psi_{\delta}(\rho_\delta(x))\ra \ln\rho\ \ \mbox{weakly in $L^2(\Omega)$}.$$
We are ready to pass to the limit in \eqref{ae3}.

The uniqueness of a solution to \eqref{ae1}-\eqref{ae2} is trivial because $\ln\rt $ is strictly monotone. The proof is complete.
	\end{proof}

\section{Proof of the main theorem}

The proof of the main theorem will be divided into several claims. We begin by showing the existence of a solution to our approximate problem \eqref{ot1}-\eqref{ot2}. Once again, we assume that
$$1<p<2.$$
This necessitates the following claim.

\begin{clm}\label{c41}For each $\delta>0$ there is a solution $(\rho,u)$ to the problem
	\begin{eqnarray}
			-\Delta\rho+\tau\ln\rho  &=&f-au \ \ \ \mbox{in $\Omega$},\label{atom3}\\
		-\mdiv\left[F_\tau(|\nabla u |^2)\nabla u \right]-\delta\Delta u +\tau u  &=&\ln\rho  \ \ \ \mbox{in $\Omega$},\label{atom5}\\
		\nabla u \cdot\nu&=& 0\ \ \ \mbox{on $\partial\Omega$},\label{atom6}\\
		\nabla\rho \cdot\nu&=&0\ \ \ \mbox{on $\partial\Omega$}\label{atom4}
	\end{eqnarray}
in the space  $W^{1,2}(\Omega)\times W^{1,2}(\Omega) $.
\end{clm}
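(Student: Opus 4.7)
The plan is to apply the Leray--Schauder fixed point theorem (Lemma \ref{lsf}) in $\mathcal B=L^2(\Omega)$. I would define $B\colon L^2(\Omega)\to L^2(\Omega)$ by $B(v)=u$ in two steps. First, given $v\in L^2(\Omega)$, Lemma \ref{uex} applied with source $f-av\in L^2(\Omega)$ produces a unique $\rho\in W^{1,2}(\Omega)$ with $\rho>0$ a.e., $\ln\rho\in L^2(\Omega)$, and $\tau\|\ln\rho\|_2\le\|f\|_2+a\|v\|_2$, solving \eqref{atom3} and \eqref{atom4}. Second, I would let $u$ be the unique minimizer in $W^{1,2}(\Omega)$ of the strictly convex, coercive functional
\begin{equation*}
u\mapsto \int_\Omega\Bigl[E_\tau(\nabla u)+\tfrac{\delta}{2}|\nabla u|^2+\tfrac{\tau}{2}u^2-u\ln\rho\Bigr]dx;
\end{equation*}
its Euler--Lagrange equation is exactly \eqref{atom5}--\eqref{atom6}. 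A fixed point of $B$, together with the associated $\rho$, yields the claim. Continuity of $B$ follows from continuous dependence on the data for both subproblems (using strict monotonicity of $\ln$ and uniform convexity of the functional), and compactness follows from the compact embedding $W^{1,2}(\Omega)\hookrightarrow L^2(\Omega)$ and the fact that $B$ actually takes values in $W^{1,2}(\Omega)$.

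The crucial step will be the a priori bound (LS3). Suppose $v=\sigma B(v)$ for some $\sigma\in[0,1]$ and write $v=\sigma u$, so that $(u,\rho)$ satisfies
\begin{equation*}
-\Delta\rho+\tau\ln\rho=f-a\sigma u,\qquad -\mdiv\bigl[F_\tau(|\nabla u|^2)\nabla u\bigr]-\delta\Delta u+\tau u=\ln\rho.
\end{equation*}
Testing the $u$-equation with $u$ gives
\begin{equation*}
\int_\Omega F_\tau(|\nabla u|^2)|\nabla u|^2\,dx+\delta\|\nabla u\|_2^2+\tau\|u\|_2^2=\int_\Omega u\ln\rho\,dx.
\end{equation*}
To pair this with an estimate from the $\rho$-equation, I want to test the latter with $\ln\rho$ itself; since $\nabla\ln\rho=\nabla\rho/\rho$ need not lie in $L^2$, I would instead use $\ln(\rho+\eta)\in W^{1,2}(\Omega)$ as an admissible test and send $\eta\to 0$. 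Monotone convergence handles $\int|\nabla\rho|^2/(\rho+\eta)\nearrow\int|\nabla\rho|^2/\rho$, while the envelope $|\ln(\rho+\eta)|\le|\ln\rho|+C$ together with the $L^2$-bound on $\ln\rho$ from Lemma \ref{uex} provides dominated convergence for the remaining terms, yielding
\begin{equation*}
\int_\Omega\frac{|\nabla\rho|^2}{\rho}\,dx+\tau\|\ln\rho\|_2^2=\int_\Omega f\ln\rho\,dx-a\sigma\int_\Omega u\ln\rho\,dx.
\end{equation*}

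Multiplying the $u$-equation identity by $a\sigma$ and adding cancels the coupling term $a\sigma\int u\ln\rho$, leaving
\begin{equation*}
a\sigma\Bigl[\int_\Omega F_\tau|\nabla u|^2+\delta\|\nabla u\|_2^2+\tau\|u\|_2^2\Bigr]+\int_\Omega\frac{|\nabla\rho|^2}{\rho}+\tau\|\ln\rho\|_2^2=\int_\Omega f\ln\rho.
\end{equation*}
Absorbing the right-hand side via Young's inequality, $\int f\ln\rho\le\frac{1}{2\tau}\|f\|_2^2+\frac{\tau}{2}\|\ln\rho\|_2^2$, I obtain $a\sigma\tau\|u\|_2^2\le\frac{1}{2\tau}\|f\|_2^2$, hence
\begin{equation*}
\|v\|_2^2=\sigma^2\|u\|_2^2\le\sigma\,\|f\|_2^2/(2a\tau^2)\le\|f\|_2^2/(2a\tau^2),
\end{equation*}
a bound uniform in $\sigma\in[0,1]$ (the case $\sigma=0$ being trivial since then $v=0$). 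Lemma \ref{lsf} then supplies the desired fixed point. The main obstacle is the rigorous justification of the $\ln\rho$ test; without the ensuing cancellation of the coupling term, testing only with $u$ gives $(\tau^2-a\sigma)\|u\|_2\le\|f\|_2$, which is useless when $a\sigma\ge\tau^2$, so it is precisely the coupled double testing that makes the a priori bound uniform in $\sigma$.
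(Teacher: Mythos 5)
Your proposal is correct and follows essentially the same strategy as the paper: a Leray--Schauder fixed-point argument in $L^2(\Omega)$ with $B$ defined by first solving the $\rho$-problem via Lemma \ref{uex} and then minimizing the associated convex energy for $u$, and an a priori bound obtained by testing the $\rho$-equation with $\ln\rho$ and the $u$-equation with $u$. The only cosmetic difference is in how the two resulting identities are combined: you multiply the $u$-identity by $a\sigma$ and add it to cancel the coupling term exactly, then read off $\|v\|_2$ directly, whereas the paper first uses the sign $\int_\Omega u\ln\rho\,dx\ge 0$ to drop the coupling term, bounds $\|\ln\rho\|_2$ in terms of $\|f\|_2$ and $\tau$, and then bounds $\|u\|_2$ from the $u$-equation; both routes yield the needed $\tau$-dependent (but $\sigma$-uniform) estimate. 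Your suggested justification of $\ln\rho$ as test function via $\ln(\rho+\eta)$ works; the paper instead invokes the approximation sequence $\rho_\delta$ (bounded below) from the proof of Lemma \ref{uex}.
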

We would like to mention that if $p\geq 2$ this claim is not needed and we can directly prove an existence assertion for \eqref{ot1}-\eqref{ot2}. 

\begin{proof} 
		The existence assertion will be established via the Leray-Schauder Theorem. To do this, we define an operator $B$ from $ L^{ 2}(\Omega)$
	into itself as follows: For each $ v\in L^{ 2}(\Omega)$ we first solve the problem
	\begin{eqnarray}
		-\Delta\rho+\tau\ln\rho  &=&f-av\ \ \ \mbox{in $\Omega$},\label{om3}\\
		\nabla\rho\cdot\nu&=&0\ \ \ \mbox{on $\partial\Omega$}.\label{om4}
	\end{eqnarray}
	By Lemma \ref{uex}, there is a unique strong solution $\rho\in W^{1,2}(\Omega)$ with 
	\begin{equation}\label{hhh11}
		\|\tau\ln\rho\|_{ 2}\leq \|f-av\|_{ 2}.
	\end{equation}
From the classical Calder\'{o}n-Zygmund theorem we have $\rho\in W^{2,2}(\Omega)$.
 We use the function $\rho$ so obtained to form the problem 
	\begin{eqnarray}
		-\mdiv\left(F_\tau(|\nabla u|^2)\nabla u\right)-\delta\Delta u +\tau u &=&\ln\rho \ \ \ \mbox{in $\Omega$},\label{om5}\\
		\nabla u\cdot\nu&=& 0\ \ \ \mbox{on $\partial\Omega$}.\label{om6}
	\end{eqnarray}
Lemma \ref{l21} asserts
	that there is a unique weak solution $u\in W^{1,2}(\Omega)$ to \eqref{om5}-\eqref{om6}.	Note that the introduction of the term $-\delta\Delta u $ is to ensure that our solution $u$ does lie in the space $ W^{1,2}(\Omega)$. 
	In fact, we have
	\begin{equation}\label{hhh10}
		\delta\io|\nabla u|^2dx+	\tau\io u^2dx\leq c\io\ln^2\rho dx.
	\end{equation}
	We define
	$$
	B( v)=u.
	$$
	We can easily conclude that $B$ is well-defined.
	
	To see that $B$  maps bounded sets into precompact ones, we assume
$$
\mbox{$\{v_n\}$ is bounded  in $L^{ 2}(\Omega)$}.
$$
 Set $$u_n=B(v_n).$$
Then we have
\begin{eqnarray}
-\Delta\rho_n+\tau\ln\rho_n  &=&f-av_n\ \ \ \mbox{in $\Omega$},\label{tom3}\\
-\mdiv\left[F_\tau(|\nabla u_n|^2)\nabla u_n\right]-\delta\Delta u_n +\tau u_n &=&\ln\rho_n \ \ \ \mbox{in $\Omega$},\label{tom5}\\
\nabla u_n\cdot\nu&=& 0\ \ \ \mbox{on $\partial\Omega$},\label{tom6}\\
\nabla\rho_n\cdot\nu&=&0\ \ \ \mbox{on $\partial\Omega$}.\label{tom4}
\end{eqnarray}
According to \eqref{hhh11} and \eqref{hhh10}, we have
\begin{equation}\label{ta3}
	\|\tau\ln\rho_n \|_2\leq \|f-av_n\|_2\leq c,\ \  \|u_n\|_{W^{1,2}(\Omega)}\leq c \|\ln\rho_n \|_2\leq c.
\end{equation}
Consequently, $\{u_n\}$ is precompact in $L^2(\Omega)$.

Now we assume that
\begin{equation}
	v_n\ra v\ \ \mbox{strongly in $L^2(\Omega)$.}\nonumber
\end{equation}
Use $\rho_n-1$ as a test function in \eqref{tom3}, then employ an argument similar to the proof of \eqref{non4}, and thereby obtain
$$
\io|\nabla\rho_n|^2dx+\io|\rho_n|dx\leq c.
$$
With the aid of this and \eqref{ta3},
we may assume
\begin{eqnarray}
u_n&\rightarrow& u\ \ \mbox{weakly in $W^{1,2}(\Omega)$, strongly in $L^2(\Omega)$, and a.e. on $\Omega$},\label{hh10}\\	
\rho_n&\rightarrow&\rho\ \ \mbox{weakly in $W^{1,2}(\Omega)$, strongly in $L^2(\Omega)$, and a.e. on $\Omega$}.\nonumber
\end{eqnarray}
(Pass to subsequences if necessary.) It immediately follows that
\begin{equation}
\ln\rho_n\rightarrow\ln\rho\ \ \mbox{weakly in $L^{2}(\Omega)$}.\nonumber
\end{equation}
By \eqref{tom5}, we have
\begin{eqnarray}
\lefteqn{-\mdiv\left[F_\tau(|\nabla u_n|^2)\nabla u_n-F_\tau(|\nabla u_m|^2)\nabla u_m\right]}\nonumber\\
&&-\delta\Delta (u_n-u_m ) +\tau (u_n-u_m )=\ln\rho_n -\ln\rho_m\ \ \ \mbox{in $\Omega$}.\nonumber
\end{eqnarray}
Use $(u_n-u_m )$ as a test function in this equation to get
\begin{eqnarray}
\lefteqn{\io\left[F_\tau(|\nabla u_n|^2)\nabla u_n-F_\tau(|\nabla u_m|^2)\nabla u_m\right]\cdot\nabla(u_n-u_m )dx}	\nonumber\\
&&+\delta\io|\nabla(u_n-u_m )|^2dx+\tau \io(u_n-u_m )^2dx\nonumber\\
&=&\io(\ln\rho_n -\ln\rho_m)(u_n-u_m )dx\leq c\|u_n-u_m\|_2.\label{hh15}
\end{eqnarray}
This combined with \eqref{hh10} implies that
\begin{equation}\label{hh11}
	u_n\ra u \ \ \mbox{strongly in $W^{1,2}(\Omega)$.}
\end{equation}
Here we would like to remark that the introduction of the term $-\delta\Delta u$ in \eqref{atom5} has played a crucial role here. It guarantees the precompactness of $\{u_n\} $ in $L^2(\Omega)$. We can also estimate the first term in \eqref{hh15} from \eqref{uni} as follows:
\begin{eqnarray}
	\io|\nabla(u_n-u_m)|^pdx&= &\io\left(1+|\nabla u_m|^2+|\nabla u_n|^2\right)^{\frac{p(2-p)}{4}}\frac{|\nabla(u_n-u_m)|^p}{\left(1+|\nabla u_m|^2+|\nabla u_n|^2\right)^{\frac{p(2-p)}{4}}}dx\nonumber\\
	&\leq&\left(\io\left(1+|\nabla u_m|^2+|\nabla u_n|^2\right)^{\frac{p-2}{2}}|\nabla(u_n-u_m) |^2dx\right)^{\frac{p}{2}}\nonumber\\
	&&\cdot\left(\io\left(1+|\nabla u_m|^2+|\nabla u_n|^2\right)^{\frac{p}{2}}dx\right)^{1-\frac{p}{2}}\nonumber\\
	&\leq & c\left[\io \left[F_\tau(|\nabla u_n|^2)\nabla u_n-F_\tau(|\nabla u_m|^2)\nabla u_m\right]\cdot\nabla(u_n-u_m)dx\right]^{\frac{p}{2}}.\nonumber
\end{eqnarray} 
Of course, this estimate becomes redundant due to the second term in \eqref{hh15}. However, it becomes relevant when we take $\delta\ra 0$ \cite{X1}.

Returning to the proof of the claim, we can pass to the limit in \eqref{tom3}-\eqref{tom4}. The whole sequence $\{(\rho_n, u_n)\}$ converges because
the uniqueness assertion holds for both problem \eqref{om3}-\eqref{om4} and problem \eqref{om5}-\eqref{om6}. Thus, $B$ is continuous.

	We still need to show that there is a positive number $c$ such that 
\begin{equation}
	\|u\|_{2}\leq c\nonumber
\end{equation}
for all $u\in L^2(\Omega)$ and $\sigma\in (0,1]$ satisfying
$$u=\sigma B(u).$$
This equation is equivalent to the boundary value problem
\begin{eqnarray}
	-\dest+\tau \ln\rho &=&f-au\ \ \ \mbox{in $\Omega$},\label{ot9}\\
	-\mdiv\left(F_\tau\left(\left|\nabla \frac{u}{\sigma}\right|^2\right)\nabla u\right)-\delta\Delta u +\tau u&=&\sigma\ln\rho \ \ \ \mbox{in $\Omega$},\label{ot10}\\
	\nabla u\cdot\nu=\nabla\rho\cdot\nu&=& 0\ \ \ \mbox{on $\partial\Omega$}.
\end{eqnarray}
By the proof of Lemma \ref{uex}, $\rho$ is the limit of a sequence of functions whose lower bounds are positive. 
Thus we can use $\ln\rho $ as a test function in \eqref{ot9} to get
\begin{equation}\label{ta5}
	\tau\io\frac{|\nabla \rho|^2}{\rho }dx+\tau\io\ln^2\rho =\io f\ln\rho dx-a\io u\ln\rho dx.
\end{equation}
Using $u$ as a test function in \eqref{ot10} yields
\begin{equation}
	\sigma\io u\ln\rho dx=\tau\io u^2dx+\io F_\tau\left(\left|\nabla \frac{u}{\sigma}\right|^2\right)|\nabla u|^2dx+\delta\io|\nabla u|^2 dx\geq 0.\nonumber
\end{equation}
Substituting this into \eqref{ta5}, we obtain
$$
\io\ln^2\rho \leq c(\tau).
$$
Use $\tau u$ as a test function in \eqref{ot10} to derive
\begin{equation}
	\io(\tau u)^2dx\leq \io\ln^2\rho \leq c.\nonumber
\end{equation}
This completes the proof of Claim \ref{c41}.
\end{proof}	

\begin{clm}\label{p21}
	Let the assumptions of the main theorem hold.
	Then there is a weak solution $(\rho, u)$ to \eqref{ot1}-\eqref{ot2} in the space $W^{1,2}(\Omega)\times \left(W^{1,p}(\Omega)\cap L^{2}(\Omega) \right)$.
\end{clm}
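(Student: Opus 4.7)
The approach is to pass to the limit $\delta\to 0^+$ in the solutions $(\rho_\delta, u_\delta)$ produced by Claim \ref{c41}, which solve \eqref{atom3}--\eqref{atom4}. The central task is to derive $\delta$-independent bounds and then handle the loss of $H^1$-compactness for $u_\delta$ that comes with the vanishing of the $-\delta\Delta u$ regularization.

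First I would derive the basic $\delta$-uniform estimates by coupling the two equations. Testing \eqref{atom3} with $\ln\rho_\delta$ (which is justified as in the proof of Lemma \ref{uex} by a sub-level-set approximation of $\rho_\delta$) gives
\[
4\io|\nabla\sqrt{\rho_\delta}|^2\,dx + \tau\io\ln^2\rho_\delta\,dx = \io f\ln\rho_\delta\,dx - a\io u_\delta\ln\rho_\delta\,dx.
\]
Testing \eqref{atom5} with $u_\delta$ yields
\[
\io F_\tau(|\nabla u_\delta|^2)|\nabla u_\delta|^2\,dx + \delta\io|\nabla u_\delta|^2\,dx + \tau\io u_\delta^2\,dx = \io u_\delta\ln\rho_\delta\,dx.
\]
Multiplying the second by $a$ and adding kills the problematic cross term; the resulting identity, combined with Young's inequality on $\io f\ln\rho_\delta\,dx$ and the inequality $F_\tau(|z|^2)|z|^2\geq c|z|^p$ for $p\in(1,2]$, yields uniform bounds on $\|\ln\rho_\delta\|_2$, $\|\nabla\sqrt{\rho_\delta}\|_2$, $\|\nabla u_\delta\|_p$, $\|u_\delta\|_2$, and the vanishing quantity $\delta\|\nabla u_\delta\|_2^2$. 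Using $f - au_\delta - \tau\ln\rho_\delta \in L^2(\Omega)$ together with elliptic regularity applied to \eqref{atom3}, I would then bootstrap to $\rho_\delta$ bounded in $W^{2,2}(\Omega)$.

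Next I would extract, along a subsequence, $\rho_\delta\to\rho$ weakly in $W^{2,2}(\Omega)$ and a.e., and $u_\delta\to u$ weakly in $W^{1,p}(\Omega)$, strongly in $L^p(\Omega)$ and a.e. To pass to the limit in $\tau\ln\rho_\delta$, I would argue exactly as in Lemma \ref{uex}: the $L^2$-bound on $\ln\rho_\delta$ combined with Fatou's lemma forces $|\{\rho=0\}|=0$, hence $\ln\rho_\delta\to\ln\rho$ a.e.\ and then weakly in $L^2(\Omega)$. This lets me pass to the limit in \eqref{atom3} directly.

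The main obstacle is passing to the limit in the nonlinear principal term $F_\tau(|\nabla u_\delta|^2)\nabla u_\delta$ of \eqref{atom5}, since the $\delta\Delta u$-coercivity that provided strong $H^1$-compactness in Claim \ref{c41} disappears. I would resolve this by a Minty-type monotonicity argument based on \eqref{uni}: using $u_\delta - u$ as a test function in \eqref{atom5} and invoking the weak limit already obtained in the other terms, I would show
\[
\limsup_{\delta\to 0}\io\bigl[F_\tau(|\nabla u_\delta|^2)\nabla u_\delta - F_\tau(|\nabla u|^2)\nabla u\bigr]\cdot\nabla(u_\delta - u)\,dx \leq 0,
\]
and then apply \eqref{uni} together with the inequality displayed at the end of the proof of Claim \ref{c41} to deduce $\nabla u_\delta\to\nabla u$ strongly in $L^p(\Omega)$. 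Up to a subsequence, this also gives $\nabla u_\delta\to\nabla u$ a.e., whence by continuity of $F_\tau$ and equi-integrability (from the uniform $L^p$-bound), $F_\tau(|\nabla u_\delta|^2)\nabla u_\delta$ converges in $L^{p/(p-1)}(\Omega)$, sufficient to pass to the limit in the weak formulation of \eqref{atom5}. The boundary conditions \eqref{ot2} are preserved in the limit since they are encoded in the test-function class $W^{1,2}(\Omega)\times W^{1,p}(\Omega)$, completing the construction of $(\rho,u)$ in the required spaces.
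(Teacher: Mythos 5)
Your overall strategy matches the paper's: obtain $\delta$-uniform a priori bounds via the same coupling of the two equations, pass to weak limits, and recover a.e.\ convergence of $\nabla u_\delta$ through the monotonicity structure — specifically \eqref{uni} together with the inequality displayed at the end of the proof of Claim~\ref{c41}, exactly the ingredient the paper points to. The paper is terse at this step and simply defers the gradient convergence to Claim~3.9 of \cite{X1}; your Minty-type argument is a reasonable way to fill that in.

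Two technical points in your outline deserve care, since they are glossed over. First, for $p<2$ the limit $u$ lies a priori only in $W^{1,p}(\Omega)$, not in $W^{1,2}(\Omega)$, so $u_\delta - u$ is in general not an admissible test function for \eqref{atom5}, which carries the $-\delta\Delta u$ term. The standard fix is to test with $u_\delta - v$ for $v\in W^{1,2}(\Omega)$ (for which $\delta\io\nabla u_\delta\cdot\nabla v\,dx \to 0$ using $\delta\|\nabla u_\delta\|_2^2\leq c$), pass to the limit, and then argue by density in $W^{1,p}(\Omega)$. Second, the passage $\io u_\delta\ln\rho_\delta\,dx\to\io u\ln\rho\,dx$ (which enters implicitly when you close the $\limsup\leq 0$ inequality) pairs two sequences that your outline only controls weakly in $L^2(\Omega)$. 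You need either strong $L^2(\Omega)$ convergence of $u_\delta$ — which requires $p^*>2$ or a separate uniform $L^\infty$ bound on $u_\delta$ from an \eqref{of4}-type estimate — or higher uniform integrability of $\ln\rho_\delta$ via the $L^\lambda$ estimate \eqref{c21} from Lemma~\ref{uex}. Neither is automatic for all $p\in(1,2]$, and this deserves an explicit argument.
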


This claim can be established by taking $\delta\ra 0 $ in \eqref{atom3}-\eqref{atom4}.
 The essential ingredients in doing so are already contained in the preceding proof. The only difference is that \eqref{hh11} is no longer true. However, for our purpose here it is enough for us to show that 
 \begin{equation}
 	\nabla u_\delta\ra\nabla u\ \ \mbox{a.e. on $\Omega$.}\nonumber
 \end{equation}
 This can be accomplished by invoking the proof of Claim 3.9 in \cite{X1}. We shall omit the details.
 
We would like to point out a mistake in the proof of Claim 3.2 in \cite{X1}. Specifically, the fact that $e^\psi\in W^{1,2}(\Omega)$ does not imply $\psi\in L^s(\Omega)$ for each $s>1$. It does only when $\psi$ is bounded below. Thus, the previous claim also serves as a correction to this mistake.

 We indicate the dependence of our approximate solutions on $\tau$ by writing
$$
	\rho=\rt,\ \ u=\ut.
$$
	Then problem \eqref{ot1}-\eqref{ot2} becomes
	\begin{eqnarray}
	-\Delta \rt+\tau\ln\rt &=&f-a \ut\ \ \ \mbox{in $\Omega$},\label{ot1t}\\
-\mdiv\left(F_\tau(|\nabla \ut|^2)\nabla \ut\right) +\tau \ut &=&\ln\rt  \ \ \ \mbox{in $\Omega$},\label{ot3t}\\
	\nabla \ut\cdot\nu&=&\nabla \rt\cdot\nu=0\ \ \ \mbox{on $\partial\Omega$}.\label{ot2t}
	\end{eqnarray}
We proceed to derive estimates for $\{\ut,\rt\}$ that are uniform in $\tau$.

\begin{clm}\label{c43} 
We have
	\begin{eqnarray}
	\io\left|\nabla\sqrt{\rt }\right|^2dx&\leq &c,\nonumber\\
	\|\ut\|_{W^{1.p}(\Omega)}&\leq &c.\label{uwp}
	\end{eqnarray}
\end{clm}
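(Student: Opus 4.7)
The strategy is to exploit the symmetric coupling between \eqref{ot1t} and \eqref{ot3t}. Testing \eqref{ot1t} with $\ln\rho_\tau$ (justified by first using the admissible test function $\ln(\rho_\tau+\eta)$, $\eta>0$, then letting $\eta\to 0$ via monotone/dominated convergence using $\rho_\tau\in W^{1,2}(\Omega)$ and the $L^2$--bound $\|\tau\ln\rho_\tau\|_2\leq\|f-au_\tau\|_2$ from Lemma \ref{uex}) yields
$$4\io|\nabla\sqrt{\rho_\tau}|^2\,dx+\tau\io\ln^2\rho_\tau\,dx=\io f\ln\rho_\tau\,dx-a\io u_\tau\ln\rho_\tau\,dx.$$
Testing \eqref{ot3t} with $u_\tau$ gives
$$\io F_\tau(|\nabla u_\tau|^2)|\nabla u_\tau|^2\,dx+\tau\io u_\tau^2\,dx=\io u_\tau\ln\rho_\tau\,dx.$$
Adding $a$ times the second identity to the first eliminates the cross term:
$$4\io|\nabla\sqrt{\rho_\tau}|^2+\tau\io\ln^2\rho_\tau+a\io F_\tau(|\nabla u_\tau|^2)|\nabla u_\tau|^2+a\tau\io u_\tau^2=\io f\ln\rho_\tau\,dx.$$

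The main obstacle is the right-hand side: Lemma \ref{uex} only gives $\|\tau\ln\rho_\tau\|_2\leq c$, which degenerates as $\tau\to 0$, so $\io f\ln\rho_\tau\,dx$ cannot be bounded directly. The key idea is to trade $\ln\rho_\tau$ for a divergence expression by substituting \eqref{ot3t} itself and integrating by parts, which is legitimate because $f\in W^{1,p}(\Omega)$ and $u_\tau$ satisfies a Neumann condition:
$$\io f\ln\rho_\tau\,dx=\io\nabla f\cdot F_\tau(|\nabla u_\tau|^2)\nabla u_\tau\,dx+\tau\io fu_\tau\,dx.$$
Using the pointwise bound $|F_\tau(|z|^2)z|\leq |z|^{p-1}+\beta_0$ (valid since $1<p\leq 2$), H\"older's inequality estimates the first term by $\|\nabla f\|_p(\|\nabla u_\tau\|_p^{p-1}+c)$, and Young's inequality \eqref{young} then converts this to $\varepsilon\|\nabla u_\tau\|_p^p+c(\varepsilon)$; the second term is bounded by $\tfrac{a\tau}{2}\|u_\tau\|_2^2+c\tau\|f\|_2^2$.

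On the left-hand side I would use the elementary inequality $(|\nabla u_\tau|^2+\tau)^{(p-2)/2}|\nabla u_\tau|^2\geq|\nabla u_\tau|^p-\tau^{p/2}$, so $F_\tau(|\nabla u_\tau|^2)|\nabla u_\tau|^2\geq|\nabla u_\tau|^p-\tau^{p/2}$. Choosing $\varepsilon$ small enough to absorb $\varepsilon\|\nabla u_\tau\|_p^p$ into $a\io|\nabla u_\tau|^p$ and the factor $\tfrac{a\tau}{2}\|u_\tau\|_2^2$ into $a\tau\io u_\tau^2$, I obtain
$$\io|\nabla\sqrt{\rho_\tau}|^2\,dx+\io|\nabla u_\tau|^p\,dx\leq c$$
uniformly in $\tau$. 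For the full $W^{1,p}$-norm of $u_\tau$, I would integrate \eqref{ot1t} and \eqref{ot3t} separately over $\Omega$ and use the Neumann boundary conditions to obtain the linear system $\tau\io\ln\rho_\tau\,dx=\io f\,dx-a\io u_\tau\,dx$ and $\tau\io u_\tau\,dx=\io\ln\rho_\tau\,dx$, whose solution gives $|\io u_\tau\,dx|=|\io f\,dx|/(\tau+a)\leq c$. Combined with the $L^p$ gradient bound and Poincar\'e's inequality around the mean, this yields $\|u_\tau\|_{W^{1,p}(\Omega)}\leq c$.
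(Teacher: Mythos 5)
Your proof follows essentially the same route as the paper: test \eqref{ot1t} with $\ln\rho_\tau$, eliminate the cross terms $\io u_\tau\ln\rho_\tau\,dx$ and $\io f\ln\rho_\tau\,dx$ by testing \eqref{ot3t} with $u_\tau$ and with $f$ (your ``substitute and integrate by parts'' is precisely the latter), absorb via Young, and obtain $|\io u_\tau\,dx|\leq c$ by integrating the two equations. One tiny slip: eliminating $\io\ln\rho_\tau\,dx$ from the integrated system gives the denominator $\tau^2+a$, not $\tau+a$, though this does not affect the bound.
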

\begin{proof} 
Multiply through \eqref{ot3t} by $\tau$ and 
add the resulting equation to \eqref{ot1t} to get
\begin{equation}
	-\Delta \rt-\tau\mdiv\left(F_\tau(|\nabla \ut|^2)\nabla \ut\right)+(a+\tau^2) \ut =f.\nonumber
\end{equation}Integrate the above equation
over $\Omega$ to obtain
\begin{equation}\label{ha12}
\left|\io\ut dx\right|=\frac{1}{a+\tau^2}\left|\io fdx\right|\leq c.
\end{equation}
Use $\ln\rt $ as a test function in \eqref{ot1t} to get
\begin{eqnarray}
	\io\frac{|\nabla\rt|^2}{\rt }dx+\tau\io\ln^2\rt dx=  \io(f-a\ut)\ln\rt  dx.\label{ha13}
\end{eqnarray}
Use $\ut, f$ as  test functions in \eqref{ot3t} successively to get
\begin{eqnarray}
\io F_\tau(|\nabla\ut|^2)|\nabla\ut|^2dx+\tau\io\ut^2dx=\io\ut \ln\rt  dx,\nonumber\\
\io F_\tau(|\nabla\ut|^2)\nabla\ut\cdot\nabla fdx+\tau\io\ut fdx=\io f  \ln\rt  dx.\nonumber
\end{eqnarray}
Use the above two equations in \eqref{ha13} to deduce
\begin{eqnarray}
	\lefteqn{\io\frac{1}{\rt }|\nabla\rt|^2dx+\tau\io\ln^2\rt dx}\nonumber\\
	&&+a\io F_\tau(|\nabla\ut|^2)|\nabla\ut|^2dx+a\tau\io\ut^2\nonumber\\
	&= &\io F_\tau(|\nabla\ut|^2)\nabla\ut\cdot\nabla fdx+\tau\io\ut fdx.\label{non5}
\end{eqnarray}
Note that
\begin{eqnarray}
	\lefteqn{a\io|\nabla\ut|^pdx+a\beta_0\io |\nabla\ut|dx}\nonumber\\
	&\leq &a\io\left(|\nabla\ut|^2+\tau\right)^{\frac{p}{2}-1}(|\nabla\ut|^2+\tau)dx+	a\beta_0\io\left(|\nabla\ut|^2+\tau\right)^{-\frac{1}{2}}(|\nabla\ut|^2+\tau)dx\nonumber\\
&\leq&a\io F_\tau(|\nabla\ut|^2)|\nabla\ut|^2dx+a\tau^{\frac{p}{2}}|\Omega|+a\beta_0\tau^{\frac{1}{2}}|\Omega|.\label{ha15}
\end{eqnarray}
Here we have used the fact that $p\leq 2$. Use this again to get
\begin{eqnarray}
	\io \left|F_\tau(|\nabla\ut|^2)\nabla\ut\cdot\nabla f\right|dx&\leq &\io|\nabla\ut|^{p-1}|\nabla f|dx+\beta_0\io|\nabla f|dx\nonumber\\
	&\leq&\|\nabla\ut\|^{p-1}_p\|\nabla f\|_p+c\|\nabla f\|_1.\nonumber
\end{eqnarray}
Plug this and \eqref{ha15} into \eqref{non5} and apply Young's inequality \eqref{young} in the resulting inequality appropriately to derive
\begin{eqnarray}
	\lefteqn{\io\frac{1}{\rt }|\nabla\rt|^2dx+\tau\io\ln^2\rt dx}\nonumber\\
	&&+\io|\nabla\ut|^pdx+\tau\io|\nabla\ut|^2dx+\tau\io\ut^2dx\nonumber\\
	&\leq& c\io|\nabla f|^pdx+c\tau\io|\nabla f|^2dx+c\tau\io| f|^2dx+c\leq c.\label{ha17}
\end{eqnarray}By virtue of the Sobolev inequality and \eqref{ha12}, we have
\begin{eqnarray}
	\io|\ut|^pdx&\leq& 2^{p-1}\left(\io\left|\ut-\frac{1}{|\Omega|}\io \ut dx\right|^pdx+\frac{1}{|\Omega|^{p-1}}\left|\io \ut dx\right|^p \right)\nonumber\\
	&\leq&c\io|\nabla\ut|^pdx+c\leq c.\nonumber
\end{eqnarray}
This gives \eqref{uwp}.
The proof is complete.\end{proof}

\begin{clm}\label{c44}We have
	\begin{equation}
		\io|\ln\rt |dx\leq c.\nonumber
	\end{equation}
\end{clm}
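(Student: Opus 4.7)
The plan is to test the second equation \eqref{ot3t} against a bounded Lipschitz truncation of $\ln\rt$, converting the uniform estimates of Claim \ref{c43} into the desired $L^1$ bound. As a warm-up, integrating \eqref{ot3t} with the Neumann condition \eqref{ot2t} yields $\tau\io\ut\,dx=\io\ln\rt\,dx$, so $|\io\ln\rt\,dx|\le\tau\|\ut\|_1\le c$ by \eqref{uwp}; it therefore suffices to control the tail $\int_{\{|\ln\rt|>1\}}|\ln\rt|\,dx$, as the trivial estimate $\int_{\{|\ln\rt|\le 1\}}|\ln\rt|\,dx\le|\Omega|$ handles the rest.

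I would take the test function $\phi:=T_1(\ln\rt)=\Psi(\rt)$, where $T_1(s)=\max(-1,\min(s,1))$ and $\Psi\colon [0,\infty)\to[-1,1]$ is the Lipschitz function equal to $-1$ on $[0,e^{-1}]$, $\ln t$ on $[e^{-1},e]$, and $1$ on $[e,\infty)$. Since $\Psi$ is globally Lipschitz and $\rt\in W^{1,2}(\Omega)$, the chain rule delivers $\phi\in W^{1,2}(\Omega)\cap L^\infty(\Omega)$ with
\[
\nabla\phi=\frac{\nabla\rt}{\rt}\mathbf{1}_{\{|\ln\rt|\le 1\}}=\frac{2}{\sqrt{\rt}}\nabla\sqrt{\rt}\,\mathbf{1}_{\{|\ln\rt|\le 1\}},
\]
which is supported on $\{e^{-1}\le\rt\le e\}$. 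On that set $1/\sqrt{\rt}\le e^{1/2}$, so $|\nabla\phi|\le 2e^{1/2}|\nabla\sqrt{\rt}|\in L^2(\Omega)$ by Claim \ref{c43}. Since $p\le 2$, $\phi$ lies in the natural test space $W^{1,p}(\Omega)\cap L^2(\Omega)$ for \eqref{ot3t}.

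Plugging $\phi$ into \eqref{ot3t} produces
\[
\io \ln\rt\,T_1(\ln\rt)\,dx=\io F_\tau(|\nabla\ut|^2)\nabla\ut\cdot\nabla\phi\,dx+\tau\io \ut\,T_1(\ln\rt)\,dx.
\]
Because $sT_1(s)\ge|s|$ on $\{|s|>1\}$ and $sT_1(s)\ge 0$ always, the left side dominates $\int_{\{|\ln\rt|>1\}}|\ln\rt|\,dx$. On the right, $|T_1|\le 1$ combined with \eqref{uwp} bounds the $\tau$-term by $c$. For the divergence term, the elementary inequality $|F_\tau(|\nabla\ut|^2)\nabla\ut|\le|\nabla\ut|^{p-1}+\beta_0$ (valid for $1<p\le 2$) together with the pointwise bound on $\nabla\phi$ reduces the matter to controlling $\io(|\nabla\ut|^{p-1}+1)|\nabla\sqrt{\rt}|\,dx$; a three-exponent H\"older inequality with exponents $\tfrac{p}{p-1}$, $2$, and $\tfrac{2p}{2-p}$ bounds this by $c|\Omega|^{1/p-1/2}\|\nabla\ut\|_p^{p-1}\|\nabla\sqrt{\rt}\|_2+c\|\nabla\sqrt{\rt}\|_2\le c$ via Claim \ref{c43}.

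The main obstacle will be the admissibility of $\phi$: its gradient naively contains the singular factor $1/\rt$, which would be fatal on $\{\rt\to 0\}$. The truncation at height $\pm 1$ is precisely the device that localizes $\mathrm{supp}(\nabla\phi)$ to $\{\rt\ge e^{-1}\}$, where the factor is harmless; the available estimate $\|\nabla\sqrt{\rt}\|_2\le c$ then absorbs the remaining gradient factor. This truncation trick is what upgrades the easy identity $|\io\ln\rt\,dx|\le c$ to the full $L^1$ bound $\io|\ln\rt|\,dx\le c$ asserted by the claim. If needed to sidestep the null set $\{\rt=0\}$, the whole computation can first be performed with $\rt+\delta$ in place of $\rt$ and then $\delta\to 0^+$, mimicking the approximation used in the proof of Lemma \ref{uex}.
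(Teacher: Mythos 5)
Your proposal is correct, and it takes a genuinely different and considerably simpler route than the paper's own proof. The paper proceeds by a dichotomy on $a_\tau=\frac{1}{|\Omega|}\int_\Omega\sqrt{\rho_\tau}\,dx$: if $\{a_\tau\}$ is bounded, the $L^1$ bound is read off from $\tau\int_\Omega u_\tau\,dx=\int_\Omega\ln\rho_\tau\,dx$ together with the $L^{N/(N-2)}$ bound on $\rho_\tau$ (Claim 3.5 of \cite{X1}); if $a_\tau\to\infty$, a contradiction is extracted through a long chain (a logarithmic gradient estimate, John--Nirenberg, a Moser iteration for $\inf_\Omega v_\tau$, an Egoroff/Jensen argument to control $\|\ln^+\rho_\tau\|_s$). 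Your argument sidesteps the dichotomy entirely: testing \eqref{ot3t} with the bounded Lipschitz truncation $\phi=T_1(\ln\rho_\tau)\in W^{1,2}(\Omega)\cap L^\infty(\Omega)$ turns the two a priori estimates of Claim \ref{c43}, namely $\|\nabla\sqrt{\rho_\tau}\|_2\le c$ and $\|u_\tau\|_{W^{1,p}}\le c$, directly into the tail bound $\int_{\{|\ln\rho_\tau|>1\}}|\ln\rho_\tau|\,dx\le c$. The truncation localizes $\nabla\phi$ to $\{e^{-1}\le\rho_\tau\le e\}$ so the singular factor $1/\sqrt{\rho_\tau}$ is harmless, the H\"older step for $\int(|\nabla u_\tau|^{p-1}+\beta_0)|\nabla\sqrt{\rho_\tau}|\,dx$ is correct (and works equally well at $p=2$ with a two-exponent H\"older), and $\phi$ lies in the natural test space $W^{1,p}(\Omega)\cap L^2(\Omega)$ for \eqref{ot3t}. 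The only (minor) thing to flag is that, unlike the warm-up integration which gives only the signed bound $|\int_\Omega\ln\rho_\tau\,dx|\le c$, it is precisely the coercivity $\ln\rho_\tau\cdot T_1(\ln\rho_\tau)\ge 0$ of the truncation that upgrades this to the unsigned $L^1$ bound; you say exactly this. Your proof also has the collateral benefit of being independent of the imported estimate from \cite{X1} and of the compactness/Fatou bookkeeping in the paper's Case 2, and the bound it produces is manifestly uniform in $\tau$ rather than obtained by passing to a subsequential limit.
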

This is Claim 3.6 in \cite{X1}. The proof there requires $p\leq 2$. Specifically,  (3.52) in \cite{X1} is not valid when $p>2$. Clearly, this restriction is not natural because formally large $p$ should make analysis easier. We will offer a proof here that removes the restriction.
\begin{proof}Denote by $\at$ the average of $\sqrt{\rt }$ over $\Omega$, i.e.,
	$$\at=\frac{1}{|\Omega|}\io\sqrt{\rt } dx.$$
It follows from the Sobolev embedding theorem that the sequence $\{\sqrt{\rt }-\at\}$ is bounded in $W^{1,2}(\Omega)$, and hence we can extract a subsequence, which we will not relabel, such that
\begin{equation}
	\sqrt{\rt }-\at \ \ \mbox{converges a.e. on $\Omega$.}\nonumber
\end{equation}
There are two possibilities: The sequence $\{\at\}$ is either bounded or unbounded. Assume the first case. We immediately yield
\begin{equation}
	\rt\rightarrow \rho \ \ \mbox{ a.e. on $\Omega$ (pass to a subsequence if need be) and $\rho(x)<\infty$  a.e. on $\Omega$.}\nonumber
\end{equation}
 Claim 3.5 in \cite{X1} asserts that for each $s\in [1, \frac{N}{N-2}]$ there is a positive number $c$
with the property
\begin{equation}\label{rr1}
	\io\rt^s dx\leq c. 
\end{equation}
	Integrate \eqref{ot3t} over $\Omega$ to get
$$
\io	\ln\rt dx=\tau\io\ut dx.
$$
This gives
\begin{equation}
	\io\ln^-\rt dx=\io\ln^+\rt dx-\tau\io\ut dx.\nonumber
\end{equation}
Consequently,
\begin{eqnarray}
	\io\left|\ln\rt \right|dx&=&\io\ln^-\rt +\io\ln^+\rt dx\nonumber\\
	&=&2\io\ln^+\rt dx-\tau\io\ut dx\nonumber\\
	&\leq &4\io\left(\sqrt{\rt }-1\right)^+dx+c\leq c.\label{r5}
\end{eqnarray}
The last step is due to \eqref{rr1} and \eqref{ha17}.

Now assume the second case holds. That is, 
\begin{equation}
	\lim_{\tau\rightarrow 0}a_\tau=\infty.\nonumber
\end{equation}
 We will show that this leads to contradiction, and hence it cannot occur.  Obviously, in this case, we have
\begin{equation}\label{rr3}
	\rt\ra\infty\ \ \mbox{a.e. on $\Omega$.}
\end{equation}	

Set
\begin{equation}
\ft=-f+a\ut^++\tau\ln^+\rt \ \ \mbox{and $\vt=\rt +\|\ft\|_{s}$},\ \ s>\max\left\{\frac{N}{2},\frac{N}{p}\right\}.	\nonumber
\end{equation}
Then we write \eqref{ot1t} as
\begin{equation}\label{efvt}
	-\Delta\vt=-\ft+a\ut^-+\tau\ln^-\rt \ \ \mbox{in $\Omega$.}
\end{equation}
Fix $y\in\overline{\Omega},\ r>0$.  Choose the cut-off function $\xi$ so that $\xi=1$ in $B_r(y)$, $\xi=0$ outside $B_{2r}(y)$, $0\leq\xi\leq 1$, and $|\nabla\xi|\leq \frac{c}{r}$. Use $\vt^{-1}\xi^2$ as a test function in \eqref{efvt}, take note of the fact that $\xi^2\nabla\vt\cdot\nu=0$ on $\partial(B_{2r}(y)\cap\Omega)$, and thereby obtain
\begin{eqnarray}
	\io\xi^2\vt^{-2}|\nabla\vt|^2 dx
&=&2\io\xi\nabla\xi\cdot\vt^{-1}\nabla\vt dx+\io\left(\ft-a\ut^--\tau\ln^-\rt \right)\vt^{-1}\xi^2 dx\nonumber\\
&\leq&\frac{1}{2}\io\xi^2\vt^{-2}|\nabla\vt|^2 dx+cr^{N-2}+\|\ft\|_{s}\|\vt^{-1}\xi^2\|_{\frac{s}{s-1}}.\label{r2}
\end{eqnarray}
Note that
\begin{equation}\label{non6}
\|\ft\|_{s}\|\vt^{-1}\xi^2\|_{\frac{s}{s-1}}=\left(\io\left(\frac{\|\ft\|_{s}\xi^2}{\rt +\|\ft\|_{s}}\right)^{\frac{s}{s-1}}dx\right)^{\frac{s-1}{s}}\leq c r^{\frac{N(s-1)}{s}}.
\end{equation}
Use this in \eqref{r2} to get
\begin{equation}\label{r7}
	\int_{B_{r}(y)\cap\Omega}|\nabla\ln\vt|^2\leq cr^{N-2}+cr^{N-2+\frac{N(s-1)}{s}-(N-2)}\leq cr^{N-2}.
\end{equation}
The last step is due to the fact that $s>\frac{N}{2}$.
Now we are in a position to apply the John-Nirenberg estimate (Theorem 7.21 in \cite{GT}). Upon doing so, we conclude that there is a positive number $\alpha$ depending only on $N,\Omega$, and the number $c$ in \eqref{r7} such that
\begin{equation}\label{jn}
	\io \vt^\alpha dx\io \vt^{-\alpha}dx\leq c.
\end{equation}
It is very important to remember that two functions have the same BMO norm if they only differ by a constant. Thus, \eqref{jn} holds in spite of \eqref{rr3}, which implies $\io\vt dx\ra \infty$. This is the key to the success of our argument.

Fix $\beta>1$. We use $\vt^{-\beta}$ as a test function in \eqref{efvt} to derive
\begin{equation}
	\beta\io\vt^{-\beta-1}|\nabla\vt|^2dx=\io\left(\ft-a\ut^--\tau\ln^-\rt \right)\vt^{-\beta}dx.\nonumber
\end{equation}
Consequently,
\begin{eqnarray}
	\io\left|\nabla\left(\vt^{-1}\right)^{\frac{\beta-1}{2}}\right|^2dx&=&\frac{(\beta-1)^2}{4\beta}\io\left(\ft-a\ut^--\tau\ln^-\rt \right)\vt^{-\beta}dx\nonumber\\
	&\leq& \frac{(\beta-1)^2}{4\beta}\|\ft\|_s\left\|\vt^{-\beta}\right\|_{\frac{s}{s-1}}\nonumber\\
	&=&\frac{(\beta-1)^2}{4\beta}\left\|\left(\vt^{-1}\right)^{\beta-1}\right\|_{\frac{s}{s-1}}.\nonumber
\end{eqnarray}
The last step is due to an algebraic manipulation similar to \eqref{non6}.
Combining this with the Sobolev embedding theorem, we deduce
\begin{eqnarray}
	\left\|\left(\vt^{-1}\right)^{\beta-1}\right\|_{\frac{N}{N-2}}&=&\left(\io \left(\vt^{-1}\right)^{\frac{\beta-1}{2}\frac{2N}{N-2}}dx\right)^{\frac{N-2}{2}}\nonumber\\
	&\leq &c\io\left|\nabla\left(\vt^{-1}\right)^{\frac{\beta-1}{2}}\right|^2dx+c\io\left(\vt^{-1}\right)^{\beta-1}dx\nonumber\\
	&\leq &c\beta \left\|\left(\vt^{-1}\right)^{\beta-1}\right\|_{\frac{s}{s-1}}.\nonumber
\end{eqnarray}
Take the $\frac{1}{\beta-1}$-th power of both sides to get
\begin{equation}
	\left\|\vt^{-1}\right\|_{\frac{N(\beta-1)}{N-2}}\leq c^{\frac{1}{\beta-1}}\beta^{\frac{1}{\beta-1}}\left\|\vt^{-1}\right\|_{\frac{s(\beta-1)}{s-1}}.\nonumber
\end{equation}
Set
$$\chi=\frac{N(s-1)}{s(N-2)}.$$
Then $\chi>1$ due to our assumption $s>\frac{N}{2}$. Take
$$\beta=1+\frac{\alpha(s-1)}{s}\chi^i,\ \ i=0,1,2,\cdots,$$
where $\alpha$ is given as in \eqref{jn}.
As a result, we have
\begin{eqnarray}
		\|\vt^{-1}\|_{\alpha\chi^{i+1}}&\leq& c^{\frac{s}{\alpha(s-1)\chi^i}}\left(1+\frac{\alpha(s-1)}{s}\chi^i\right)^{\frac{s}{\alpha(s-1)\chi^i}}\|\vt^{-1}\|_{\alpha\chi^{i}}\nonumber\\
		&\leq&c^{\frac{s}{\alpha(s-1)\chi^i}}\left(\chi^i+\frac{\alpha(s-1)}{s}\chi^i\right)^{\frac{s}{\alpha(s-1)\chi^i}}\|\vt^{-1}\|_{\alpha\chi^{i}}\nonumber\\
		&\leq &c^{\frac{s}{\alpha(s-1)\chi^i}}\chi^{\frac{si}{\alpha(s-1)\chi^i}}\|\vt^{-1}\|_{\alpha\chi^{i}}.\nonumber
\end{eqnarray}
Now we are ready to iterate $i$ to get
\begin{eqnarray}
	\|\vt^{-1}\|_{\alpha\chi^{i+1}}&\leq& c^{\frac{s}{\alpha(s-1)}\left(\frac{1}{\chi^i}+\frac{1}{\chi^{i-1}}+\cdots+1\right)}\chi^{\frac{s}{\alpha(s-1)}\left(\frac{i}{\chi^i}+\frac{i-1}{\chi^{i-1}}+\cdots+\frac{0}{1}\right)}\|\vt^{-1}\|_{\alpha}\nonumber\\
	&\leq &c\|\vt^{-1}\|_{\alpha}.\nonumber
\end{eqnarray}
Taking $i\ra\infty$ yields
\begin{equation}\label{r9}
	\inf_\Omega\vt\geq c\|\vt^{-1}\|_{\alpha}^{-1}\geq c\|\vt\|_{\alpha}\geq c\|\rt \|_{\alpha}.
\end{equation}
The second to last step is due to \eqref{jn}. 

By the definition of $\ft$,
\begin{equation}\label{r10}
	\|\ft\|_s\leq \|f\|_s+a\|\ut^+\|_s+\tau\|\ln^+\rt \|_s.
\end{equation}
We can easily infer from the proof of \eqref{of4} that
$$\|\ut^+\|_\infty\leq c\|\ut^+\|_1+c\left(\|\ln^+\rt \|_s\right)^{\frac{1}{p-1}}+\sqrt{\tau}\leq c\left(\|\ln^+\rt \|_s\right)^{\frac{1}{p-1}}+c.$$
Observe that the function $\ln^st$ is concave on the interval $[e^{s-1}, \infty)$. 
 Furthermore, Egoroff's theorem asserts that there is a positive number $c_0$ with the property
$$\left|\{\rt \geq e^{s-1}\}\right|\geq c_0\ \ \mbox{at least for $\tau$ sufficiently small.}$$
With these in mind, we derive from Jensen's inequality that
\begin{eqnarray}
	\|\ln^+\rt \|_s
&=& \left(\frac{1}{\alpha^s}\int_{\{\rt \geq e^{s-1}\}}\left(\ln\rt ^\alpha\right)^sdx+\int_{\{1\leq\rt < e^{s-1}\}}\ln^s\rt dx\right)^{\frac{1}{s}}\nonumber\\
&\leq&  \left(\frac{\left|\{\rt \geq e^{s-1}\}\right|}{\alpha^s}\ln^s\left[\frac{1}{\left|\{\rt \geq e^{s-1}\}\right|}\int_{\{\rt \geq e^{s-1}\}}\rt ^\alpha dx\right]dx+c\right)^{\frac{1}{s}}\nonumber\\
&\leq &c\ln\|\rt \|_\alpha+c.\nonumber
\end{eqnarray}
 Use this alone with \eqref{r10} in \eqref{r9} to derive
\begin{equation}\label{r4}
	\inf_\Omega\rt \geq c\|\rt \|_{\alpha}-\|f\|_s-c\left(\ln\|\rt \|_\alpha\right)^{\frac{1}{p-1}}-c\tau\ln\|\rt \|_\alpha-c.
\end{equation}
From \eqref{rr3}, with the aid of Fatou's lemma, we obtain
$$\lim_{\tau\rightarrow 0}\|\rt\|_{\alpha}=\infty.$$
This together with \eqref{r4} implies that there is a positive number $c_0$ such that
\begin{equation}
	\rt \geq c_0\ \ \mbox{for $\tau$ sufficiently small.}\nonumber
\end{equation}
It immediately follows that
$$\io\ln^-\rt dx\leq c.$$
By a calculation similar to \eqref{r5},
\begin{eqnarray}
	\io\left|\ln\rt \right|dx&=&\io\ln^-\rt +\io\ln^+\rt dx\nonumber\\
	&=&2\io\ln^-\rt dx+\tau\io\ut dx\leq c.\nonumber
\end{eqnarray}
By Fatou's lemma again, we have
\begin{equation}\label{lnb}
	\io|\ln\rho |dx=\io\lim_{\tau\rightarrow 0}|\ln\rt |dx\leq \limsup_{\tau\rightarrow 0}\io|\ln\rt |dx\leq c.
\end{equation}
This contradicts \eqref{rr3}.
The proof is complete.
	\end{proof}
\begin{clm}\label{aec} The sequence $\{\rt\}$ is precompact in $L^q(\Omega)$ for each $q\in [1,\frac{N}{N-2})$.
%
\end{clm}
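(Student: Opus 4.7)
The plan is to leverage the uniform $W^{1,2}$ bound on $\sqrt{\rt}$ that is implicit in Claims \ref{c43} and \ref{c44}, and then invoke Rellich--Kondrachov compactness. The key observation is that the gradient estimate $\io|\nabla\sqrt{\rt}|^2dx\leq c$ together with an $L^1$-bound on $\sqrt{\rt}$ will yield boundedness of $\{\sqrt{\rt}\}$ in $W^{1,2}(\Omega)$, which via compact Sobolev embedding supplies almost everywhere convergence of a subsequence of $\{\rt\}$.

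First I would extract the $L^1$-bound on $\sqrt{\rt}$. This is a byproduct of the proof of Claim \ref{c44}: the second alternative there, namely $\at\to\infty$, was ruled out by contradiction, so $\at=\frac{1}{|\Omega|}\io\sqrt{\rt}dx$ stays uniformly bounded. Combined with the gradient bound from Claim \ref{c43}, Poincar\'{e}'s inequality then yields $\{\sqrt{\rt}\}$ bounded in $W^{1,2}(\Omega)$.

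By the Rellich--Kondrachov compactness theorem, any subsequence admits a further subsequence (not relabeled) with $\sqrt{\rt}\to \Phi$ strongly in $L^r(\Omega)$ for every $r<\frac{2N}{N-2}$ and a.e.\ on $\Omega$. Setting $\rho=\Phi^2$ gives $\rt\to \rho$ a.e. Moreover, the Sobolev embedding $W^{1,2}(\Omega)\hookrightarrow L^{\frac{2N}{N-2}}(\Omega)$ yields the uniform bound $\|\rt\|_{\frac{N}{N-2}} = \|\sqrt{\rt}\|_{\frac{2N}{N-2}}^2\leq c$. For any $q<\frac{N}{N-2}$ and measurable $E\subset\Omega$, H\"older's inequality then gives
\[
\int_E \rt^q dx\leq |E|^{1-\frac{q(N-2)}{N}}\|\rt\|_{\frac{N}{N-2}}^q\leq c|E|^{1-\frac{q(N-2)}{N}},
\]
so $\{\rt^q\}$ is equi-integrable, and Vitali's convergence theorem upgrades almost everywhere convergence to strong convergence in $L^q(\Omega)$. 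Since every subsequence has a further subsequence converging in $L^q(\Omega)$, precompactness follows.

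The only real subtlety is pinning down the $L^1$-bound on $\sqrt{\rt}$, which is essentially a matter of reading it off from the alternative argument inside Claim \ref{c44}; beyond that, the argument is a standard compactness-plus-equi-integrability proof and I do not expect any serious obstacle.
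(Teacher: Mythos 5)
Your argument is correct, and in substance it follows the same route the paper takes, but it is considerably more explicit. The paper's own proof is very terse: it uses $\io|\ln\rho|dx\leq c$ from Claim \ref{c44} to show $|\{\rho=0\}|=|\{\rho=\infty\}|=0$, and then simply says ``the claim follows from Claim 3.5 in [X1]'', where that external claim supplies the uniform bound $\io\rt^s\,dx\leq c$ for $s\in[1,\tfrac{N}{N-2}]$; the a.e.-convergence and Vitali steps are left implicit. You instead derive everything in place: boundedness of $a_\tau$ (extracted from the contradiction argument inside Claim \ref{c44}) plus the gradient bound from Claim \ref{c43} gives $\{\sqrt{\rt}\}$ bounded in $W^{1,2}(\Omega)$, Rellich--Kondrachov gives a.e.\ convergence of $\sqrt{\rt}$ along a subsequence, Sobolev gives the uniform $L^{N/(N-2)}$ bound on $\rt$ (which is exactly what [X1]'s Claim 3.5 provides), and H\"older plus Vitali finish it. So what you gain is a self-contained proof that doesn't lean on an outside reference and that makes the subsequence/Vitali mechanism visible. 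One small stylistic point: rather than saying the $L^1$ bound on $\sqrt{\rt}$ is ``implicit in Claims \ref{c43} and \ref{c44}'', it is cleaner to state outright that the dichotomy argument in Claim \ref{c44} forces $\{a_\tau\}$ to be bounded for the whole sequence (not just along a subsequence), since if it were unbounded one could pass to a subsequence with $a_\tau\to\infty$ and reproduce the contradiction; this is what you need to conclude $\|\sqrt{\rt}\|_{1}\leq c$ uniformly.
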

\begin{proof}
For each $L>1$ we have from \eqref{lnb} that
\begin{eqnarray}
	\left|\left\{\rho\leq \frac{1}{L}\right\}\right|&\leq &\frac{1}{\ln L}\io|\ln\rho|dx\leq \frac{c}{\ln L},\nonumber\\
		\left|\left\{\rho>L\right\}\right|&\leq &\frac{1}{\ln L}\io|\ln\rho|dx\leq \frac{c}{\ln L}\nonumber
\end{eqnarray}
Consequently,
\begin{equation}\label{r41}
	|\{\rho =0\}|=|\{\rho =\infty\}|=0.
\end{equation}
The claim follows from Claim 3.5 in \cite{X1}.
\end{proof}


\begin{clm}\label{fini} There is an open subset $\Omega_0\subset\Omega$ such that
	 \begin{equation}\label{r40}
		\ln\rho\in L^\infty_{\textup{loc}}(\Omega_0)\ \ \mbox{and $\left|\Omega\setminus\Omega_0\right|=0$.}
	\end{equation}
\end{clm}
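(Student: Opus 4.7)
My plan is to take $\Omega_0=\bigcup_{\varepsilon\in(0,2)}\Omega_\varepsilon$ exactly as in \eqref{la1} and establish the three required conclusions in order: $|\Omega\setminus\Omega_0|=0$ by Lebesgue differentiation; a pointwise lower bound $\rho\geq c_0>0$ in a neighbourhood of every $x_0\in\Omega_\varepsilon$ via a weak Harnack inequality applied to the limiting equation $-\Delta\rho=f-au$; and a matching upper bound by Calder\'on--Zygmund. Openness of $\Omega_0$ follows as a corollary of the lower bound.

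\textbf{Step 1: measure of the complement.} Claim \ref{aec} supplies $\rho\in L^1(\Omega)$ and $|\{\rho=0\}|=0$. At almost every $x_0$ Lebesgue differentiation gives $|B_R|^{-1}\int_{B_R(x_0)}\rho\,dy\to\rho(x_0)$, and outside the further null set $\{\rho=0\}$ we have $\rho(x_0)>0$. For every $\varepsilon\in(0,2)$ this forces
$$R^{-(N+2-\varepsilon)}\!\int_{B_R(x_0)}\rho\,dy\;\sim\;c_N\rho(x_0)R^{\varepsilon-2}\;\longrightarrow\;+\infty,$$
so almost every $x_0$ belongs to every $\Omega_\varepsilon$ and in particular to $\Omega_0$.

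\textbf{Step 2: local bounds on $\rho$.} I first pass to the limit $\tau\to 0$ in \eqref{ot1t}: since $\|\tau\ln\rt\|_{L^1}\leq c\tau\to 0$ (a consequence of Claim \ref{c44}), $\rho$ is a nonnegative distributional solution of $-\Delta\rho=g$ with $g=f-au\in L^{p^*}_\textup{loc}$. Fix $x_0\in\Omega_\varepsilon$ and choose $R_0$ small along the subsequence for which $\int_{B_{R_0}(x_0)}\rho\,dy\geq\delta R_0^{N+2-\varepsilon}$. Provided $g\in L^s_\textup{loc}$ for some $s>N/\varepsilon$, the weak Harnack inequality for nonnegative supersolutions of the Poisson equation yields
$$\inf_{B_{R_0/2}(x_0)}\rho\;\geq\;cR_0^{-N}\!\!\int_{B_{R_0}(x_0)}\!\rho\,dy\;-\;CR_0^{2-N/s}\|g\|_{s,B_{R_0}}\;\geq\;c\delta R_0^{2-\varepsilon}-CR_0^{2-N/s}.$$
Because $s>N/\varepsilon$ forces $2-N/s>2-\varepsilon$, the subtracted term is of strictly higher order as $R_0\to 0$, and for $R_0$ small enough I obtain $\rho\geq c_0>0$ on $B_{R_0/2}(x_0)$. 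Calder\'on--Zygmund applied to $-\Delta\rho=g\in L^s$ gives $\rho\in W^{2,s}_\textup{loc}\hookrightarrow L^\infty_\textup{loc}$, and combined with the lower bound this produces $\ln\rho\in L^\infty(B_{R_0/4}(x_0))$. Finally, every $y\in B_{R_0/2}(x_0)$ inherits $\int_{B_r(y)}\rho\geq c_0 r^N$ for small $r$, hence $y\in\Omega_{\varepsilon'}$ for every $\varepsilon'\in(0,2)$, and $\Omega_0$ is open.

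\textbf{Main obstacle.} The crux is securing $g\in L^s_\textup{loc}$ with $s>N/\varepsilon$. When $p>N/3$ this is automatic via Sobolev by taking $\varepsilon$ close to $2$, so that $p^*>N/\varepsilon$. For small $p$ a bootstrap is required: I would first extract a qualitative lower bound $\rho\geq c_0$ near $x_0$ from the energy estimate $\int|\nabla\sqrt{\rt}|^2\leq c$ of Claim \ref{c43}, combining Poincar\'e's inequality Lemma \ref{poin} applied on sublevel sets $\{\sqrt{\rt}\leq\eta\}$ with the defining condition $\int_{B_{R_0}}\rho\geq\delta R_0^{N+2-\varepsilon}$ to rule out a sublevel set that is too large. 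Once $\ln\rho$ is known to be bounded below on a small ball, Lemma \ref{l21} promotes $u$ to $L^\infty_\textup{loc}$, and the weak Harnack step of Step 2 then runs cleanly with $s=\infty$.
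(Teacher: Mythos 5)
Your skeleton---Lebesgue differentiation for $|\Omega\setminus\Omega_0|=0$, a weak Harnack lower bound at points of $\Omega_0$, an elliptic upper bound, and openness as a by-product of the lower bound---is the same as the paper's, and your Step 1 and openness argument are fine. The gap is in how you feed the Harnack inequality. You work at the level of the limit equation $-\Delta\rho=g$ with $g=f-au$ known only in $L^{p^*}$. First, you cannot ``take $\varepsilon$ close to $2$'': the sets $\Omega_\varepsilon$ in \eqref{la1} shrink as $\varepsilon$ increases, so a point of $\Omega_0=\cup_{0<\varepsilon<2}\Omega_\varepsilon$ is only guaranteed to satisfy the density condition for some possibly very small $\varepsilon$, and your Harnack step then needs $g\in L^s$ with $s>N/\varepsilon$ arbitrarily large, far beyond $L^{p^*}$; since the claim's $\Omega_0$ is the set appearing in (D2), those points must be handled (restricting to $\varepsilon$ near $2$ would still give \emph{some} open set of full measure, but not the set the theorem needs, and it fails anyway in the next case). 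Second, the hypotheses allow any $1<p\le 2$; for $N\ge 4$ one can have $p\le N/3$, so $p^*\le N/2$ and both your Harnack step and your Calder\'on--Zygmund step ($W^{2,s}\hookrightarrow L^\infty$ also requires $s>N/2$) break down even for $\varepsilon$ near $2$. Third, your fallback bootstrap is not a proof: the defining condition only says $\avint_{B_{R_0}(x_0)}\rho\,dx\gtrsim R_0^{2-\varepsilon}\to 0$, an average that tends to zero, so it cannot prevent the sublevel set $\{\sqrt{\rho}\le\eta\}$ from occupying most of $B_{R_0}(x_0)$ for any fixed $\eta>0$, and no pointwise bound $\rho\ge c_0$ can be extracted from Lemma \ref{poin} this way; moreover Lemma \ref{l21} concerns the approximate equation, so ``promoting $u$ to $L^\infty_{\textup{loc}}$'' requires a lower bound on $\rt$ uniform in $\tau$, which a bound on the limit $\rho$ alone does not supply. (A further technicality: Theorem 8.18 of \cite{GT} is stated for $W^{1,2}$ supersolutions, while the limit $\rho$ is only known in $W^{1,\frac{N}{N-1}}_{\textup{loc}}$; the paper avoids this by applying it to each $\rt$ and passing to the limit in the resulting inequality.)

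The idea you are missing is the sign decomposition at the approximate level, which is what lets the paper treat every $\varepsilon\in(0,2)$ and every $1<p\le 2$. From \eqref{ot1t}, $-\Delta\rt=f-a\ut-\tau\ln\rt\ge -\ft$ with $\ft=-f+a\ut^++\tau\ln^+\rt$, the terms $a\ut^-$ and $\tau\ln^-\rt$ being discarded because they have a favorable sign for the supersolution inequality. Since $\{\rt\}$ is bounded in $L^1(\Omega)$, $\{\ln^+\rt\}$ is bounded in every $L^s(\Omega)$, and then \eqref{of4} (Lemma \ref{l21}, applied with right-hand side $\ln\rt$) bounds $\{\ut^+\}$ in $L^\infty(\Omega)$ uniformly; hence $\ft$ is bounded in $L^s(\Omega)$ for \emph{every} $s\ge 1$, uniformly in $\tau$, with no relation between $p$ and $N$ required. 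Applying Theorem 8.18 of \cite{GT} to each $\rt$ with the exponent $s$ matched to the given point ($\varepsilon=N/s$) and letting $\tau\to0$ gives the uniform lower bound $\inf_{B_{R_0/2}(x_0)}\rt\ge\ve_0/2$; this (i) bounds $(\ln\rt)^-$ there, so Lemma \ref{l21} controls $\ut^-$ locally and uniformly, and (ii) makes the full right-hand side of the $\rt$-equation locally bounded, so Theorem 8.17 of \cite{GT} yields $\sup_{B_{R_0/8}(x_0)}\rt\le c$, whence $\ln\rho\in L^\infty(B_{R_0/8}(x_0))$. Without this positive-part device (or some substitute producing a supersolution right-hand side in every $L^s$ uniformly in $\tau$), your argument covers at best the points of $\Omega_\varepsilon$ with $\varepsilon>N/p^*$ and nothing when $p\le N/3$, so as written the proposal has a genuine gap.
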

\begin{proof}
	Remember that $\rt$ satisfies
	\begin{equation}
		-\Delta \rt\geq-\ft\ \ \mbox{in $\Omega$.}\nonumber
	\end{equation}
	Lemma \ref{aec} asserts that the sequence $\{\ln^+\rt\}$ is bounded in $L^s(\Omega)$ for each $s\geq 1$, while Lemma \ref{l21} and \eqref{uwp} says that $\{\ut^+\}$ is bounded in $L^\infty(\Omega)\cap W^{1,p}(\Omega)$. Thus we may assume
	that
	\begin{eqnarray}
		\ut&\ra& u\ \ \mbox{weakly in $ W^{1,p}(\Omega)$ and a.e. on $\Omega$,}\nonumber\\
		\ut^+&\ra&u^+\ \ \mbox{weak$^*$ in $L^\infty(\Omega)$.}\nonumber
	\end{eqnarray}It immediately follows that
	\begin{equation}
		\ft=-f+a\ut^++\tau\ln^+\rt\ra-f+au^++\tau\ln^+\rho\equiv g\ \ \mbox{ strongly in $L^s(\Omega)$ for each $s\geq 1$.}\nonumber
	\end{equation} By Theorem 8.18 in (\cite{GT}, p.194), for each $s>\frac{N}{2}$ there exists a positive number c such that
	\begin{eqnarray}
		\inf_{\brh}\rt&\geq &\frac{c}{|\br|}\int_{B_{R}(x_0)}\rt dx-cR^{2\left(1-\frac{N}{2s}\right)}\|\ft\|_{s,\br}\nonumber\\
		&\ra&\frac{c}{|\br|}\int_{B_{R}(x_0)}\rho dx-cR^{2\left(1-\frac{N}{2s}\right)}\|g\|_{s,\br}\nonumber\\
		&=&cR^{2\left(1-\frac{N}{2s}\right)}\left(\frac{c}{R^{N+2\left(1-\frac{N}{2s}\right)}}\int_{B_{R}(x_0)}\rho dx-\|g\|_{s,\br}\right).\nonumber
	\end{eqnarray}
If $\limsup_{R\rightarrow 0}\frac{1}{R^{N+2\left(1-\frac{N}{2s}\right)}}\int_{B_{R}(x_0)}\rho dx>0$, then there is a positive number $R_0\in (0, \mbox{dist}(x_0, \po))$ such that
$$cR_0^{2\left(1-\frac{N}{2s}\right)}\left(\frac{c}{R_0^{N+2\left(1-\frac{N}{2s}\right)}}\int_{B_{R_0}(x_0)}\rho dx-\|g\|_{s,B_{R_0}(x_0)}\right)\equiv\ve_0>0.$$
Consequently,
\begin{equation}\label{rrr1}
		\inf_{B_{\frac{R_0}{2}}(x_0)}\rt\geq \frac{1}{2}\ve_0\ \ \mbox{for $\tau$ sufficiently small.}
\end{equation}
This together with Lemma \ref{l21} implies that 
$$\sup_{B_{\frac{R_0}{4}}(x_0)}\ut^-\leq c.$$
This puts us in a position to use Theorem 8.17 in \cite{GT}. Upon doing so, we arrive at
$$\sup_{B_{\frac{R_0}{8}}(x_0)}\rt\leq c.$$ It follows that $\ln\rho\in L^\infty(B_{\frac{R_0}{8}}(x_0))$. We take
$$\Omega_0=\cup_{s>\frac{N}{2}}\left\{x_0\in\Omega: \limsup_{R\rightarrow 0}\frac{1}{R^{N+2\left(1-\frac{N}{2s}\right)}}\int_{B_{R}(x_0)}\rho dx>0 \right\}.$$
Obviously, $\Omega_0$ is open and equal to the $\Omega_0$ in (D2). The second part of \eqref{r40} is a consequence of \eqref{r41}. The proof is complete.
	\end{proof}
\begin{clm}Let $\Omega_0$ be given as in Claim \ref{fini}. Then we have
	\begin{equation}
		|\nabla u|\in L^\infty_{\textup{loc}}(\Omega_0).\nonumber
	\end{equation}
	\end{clm}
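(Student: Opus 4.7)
The plan is to combine the interior sup-estimate \eqref{ulb} from Lemma \ref{l21} and the interior gradient estimate \eqref{nub} with the pointwise bounds on $\rt$ that were already extracted inside the proof of Claim \ref{fini}. Fix $x_0\in\Omega_0$. By that proof, there exist $R_0>0$, $\varepsilon_0>0$, and a constant $C_0$, all independent of $\tau$, such that
$$\tfrac{1}{2}\varepsilon_0\leq \rt(x)\leq C_0\quad\text{on }B_{R_0/8}(x_0)$$
for every sufficiently small $\tau$. Hence $\ln\rt$ is bounded in $L^\infty(B_{R_0/8}(x_0))$, and in particular in every $L^s$ of that ball, uniformly in $\tau$.

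Pick a ball $B_R(x_0)\subset B_{R_0/8}(x_0)$ and choose $s>\max\{N/p,N/2\}$. Applying \eqref{ulb} to \eqref{ot3t} on $B_R(x_0)$ with ``$f$''$=\ln\rt$, together with the global bound $\|\ut\|_{W^{1,p}(\Omega)}\leq c$ from Claim \ref{c43}, I obtain
$$\sup_{B_{R/2}(x_0)}|\ut|\leq C_1$$
uniformly in small $\tau$.

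Next I apply \eqref{nub} with some $\ell>N$ on $B_{R/2}(x_0)$, again to \eqref{ot3t}. The average $\avint_{B_{R/2}(x_0)}|\nabla\ut|\,dx$ is controlled by a fixed constant times $\|\nabla\ut\|_p$, hence by \eqref{uwp} uniformly in $\tau$, while by the previous two paragraphs the quantity $\|\ln\rt+\tau\ut\|_{\ell,B_{R/2}(x_0)}$ is likewise bounded. Consequently
$$\sup_{B_{R/4}(x_0)}|\nabla\ut|\leq C_2$$
uniformly in small $\tau$.

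Finally I transfer this bound to $u$: the a.e.\ convergence $\nabla\ut\to\nabla u$, extractable from the uniform $W^{1,p}$-bound by the monotonicity argument already invoked in the proof of Claim \ref{p21} (and detailed in \cite{X1}), combined with Fatou's lemma, yields $|\nabla u|\leq C_2$ a.e.\ on $B_{R/4}(x_0)$. Since $x_0\in\Omega_0$ was arbitrary, $|\nabla u|\in L^\infty_{\textup{loc}}(\Omega_0)$ follows. The main obstacle I foresee is not any individual step but rather keeping the three estimates simultaneously uniform in $\tau$; once the two-sided pointwise bound on $\rt$ from Claim \ref{fini} is in hand, the argument reduces to a routine concatenation of the tools built up in Section 3.
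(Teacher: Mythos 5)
Your proposal is correct and follows essentially the same approach as the paper: using the two‑sided interior control of $\rt$ established inside the proof of Claim \ref{fini}, deducing a uniform $L^\ell$ (in fact $L^\infty$) bound on $\ln\rt$ near $x_0$, and then invoking the interior gradient estimate \eqref{nub} from Lemma 3.2 together with the uniform $W^{1,p}$ bound \eqref{uwp} to get a $\tau$-uniform bound on $|\nabla\ut|$ and pass to the limit. The paper's proof is terser — it observes that \eqref{rrr1} makes $\{\ln\rt\}$ bounded in $L^s(B_{R_0/2}(x_0))$ for each $s>1$ and then cites \eqref{nub} directly — but the substance is the same; your extra use of \eqref{ulb} to bound $|\ut|$ (controlling the $\tau|u|$ term in \eqref{nub}) and your explicit Fatou step are legitimate expansions of what the paper leaves implicit.
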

\begin{proof}If \eqref{rrr1} holds, then $\{\ln\rt\}$ is bounded in $L^s\left(B_{\frac{R_0}{2}}(x_0)\right)$ for each $s>1$. A simple application of \eqref{nub} gives the desired result.
	\end{proof}
\begin{proof}[Proof of Theorem \ref{th1.1}]
	To complete the proof of the main theorem, we just need to mention that  (D3) can be established in the same way as in \cite{X2}, while the point-wise convergence of $\{\nabla\ut\}$ can be obtained as in \cite{X1}. This finishes the proof.
\end{proof}

We conclude by remarking that if $x_0$ is a singular point then  we must have 
$$\lim_{R\rightarrow 0}\frac{1}{R^{N+2\left(1-\frac{N}{2s}\right)}}\int_{B_{R}(x_0)}\rho dx=0,\ \ s>\frac{N}{2}.$$
If $p>\frac{N}{3}$, then $\frac{Np}{N-p}>\frac{N}{2}$. Thus we can apply Theorem 8.17 in \cite{GT} to get
$$\sup_{\brh}\rho\leq \frac{c}{|\br|}\int_{B_{R}(x_0)}\rho dx+cR^{2\left(1-\frac{N}{2s}\right)}\|f-au\|_{s,\br},\ \ s=\frac{Np}{N-p}.$$
This implies
$$\sup_{\br}\rho=o(R^{\frac{3p-N}{p}}).$$

\end{document}